\pgfplotsset{width=6.6cm,compat=1.7}
\DeclareMathOperator{\id}{id}
\DeclareMathOperator{\Grid}{Grid}
 \theoremstyle{plain}
 \newtheorem{thm}{Theorem}[section]
 \newtheorem{cor}[thm]{Corollary}
 \newtheorem{lem}[thm]{Lemma}
 \newtheorem{prop}[thm]{Proposition}
 \newtheorem{question}[thm]{Question}
 \theoremstyle{definition}
 \newtheorem{example}[thm]{Example}
 \theoremstyle{remark}
\title{Groups generated by pattern avoiding permutations}
\date{}
\author{Marilena Barnabei\thanks{corresponding author} \\
P.A.M. \\
Universit\`a di Bologna, 40126, ITALY \\
\texttt{marilena.barnabei@unibo.it}\and
Niccol\`o Castronuovo \\
Liceo ``A. Einstein'', Rimini, 47923, ITALY \\
\texttt{castronuovoniccolo@gmail.com}\and
Matteo Silimbani \\
Istituto Comprensivo ``E. Rosetti'', Forlimpopoli, 47034, ITALY \\
\texttt{matteosilimbani@icrosetti.istruzioneer.it}}
\begin{document}
\maketitle
\begin{abstract}
We study groups generated by sets of pattern avoiding permutations. In the first part of the paper we prove some general results concerning the structure of such groups. In the second part we carry out a case-by-case analysis of groups generated by permutations avoiding few short patterns. 
\end{abstract}

\noindent {\bf Keywords:} Pattern avoiding permutation, permutation group, generated group. 

\noindent {\bf MSC2020:} 05A05, 20B05 (primary); 20B10 (secondary).

\section{Introduction}

The symmetric group $S_n$ over $n$ elements is ubiquitous in mathematics, indeed, it is studied both from a group-theoretical  and a combinatorial point of view. 

In the first case the elements of  $S_n$ are thought as bijections  acting on the set $\{1,\,2,\,\ldots\,, n\}$ and are written as sequences of cycles, in the second case they are considered as words containing all the symbols from $1$ to $n$ once and are written in one-line notation as the sequence of such symbols.

The group-theoretical study of the symmetric group embodies the wide fields of  representation theory of the symmetric group (see, e.g., \cite{sagansymmetric}) and permutation groups (see, e.g., \cite{cameron1999permutation}).
The combinatorics of permutations (see, e.g., \cite{bona2022combinatorics}) includes many topics among whom one of the most recently developed is the study of pattern avoiding permutations (see, e.g., \cite{Ki}).

The two aspects are rarely considered together, due to the different nature of the two approaches. Papers that deal with both the perspectives are, for instance,  
\cite{Albert2007CompositionsOP,Atkinson2001PermutationIA,BurcroffPatternavoidingPP,KarpilovskijComposabilityOP, LehtonenPermutationGA,LehtonenPermutationGP}.

In this paper we consider the problem of determining the group generated by a set of pattern avoiding permutations.  This topic has never been investigated, to the best of our knowledge.

We begin with introducing some notations and presenting some preliminary results (Sections \ref{notations_defs} and \ref{section_lemmas}).

Then, given a set of patterns $T,$ we consider the subset $S_n(T)$ of the symmetric group $S_n$ consisting of permutations avoiding each pattern in $T,$ and turn our attention to the group $G_n$ generated by $S_n(T).$

When dealing with such groups some questions naturally arise. First of all, under which conditions the sequence $(G_n)_{n\geq 0}$ is eventually constant and which kind of groups arise?

In Section \ref{section_Sk} we find a set of patterns such that $(G_n)_{n\geq 0}$ is eventually constant and equal to an assigned symmetric group $S_k.$ In Section \ref{section_alternating} we show that the alternating group can not be obtained in this way and that the only non-trivial abelian groups that arise in this way, are $\mathbb Z_2$ and $\mathbb Z_2\times \mathbb Z_2.$
The proof of such results take advantage of a characterization of those sets of patterns such that the sequence $(G_n)$ has polynomial growth.

A further question is under which conditions $G_n=S_n$ for every sufficiently large $n.$ 

In Section \ref{section_gen_Sn} we single out a fairly general condition under which this happens. Sections \ref{section_T_three} and \ref{section_T_four} are devoted to the analysis of the groups generated by $S_n(T)$ when $T$ consists of few patterns or  patterns of small length. In the last section we present some open problems.

\section{Notations and definitions}\label{notations_defs}

Firstly we recall some notations from group theory.
We denote by
\begin{itemize}
\item  $S_n$ the symmetric group, i.e., the set of permutations of  $[n]=\{1,2,\ldots ,n\},$ with the operation of composition, and by $S$ the union of all these sets, $$S=\bigcup_{n\geq 0} S_n,$$

\item  $A_n$ the alternating group, i.e., the subgroup of $S_n$ of even permutations,
\item  $\mathbb Z_n$ the cyclic group with $n$ elements,
\item  $D_n$ the dihedral group with $2n$ elements, i.e., the group of symmetries of the regular $n$-agon.
\end{itemize}

Recall that every element $\pi$ of $S_n$ can be written either as a product of cycles, or in one-line notation $\pi=\pi_1\,\pi_2\,\ldots\,\pi_n$ as the sequence of images of the elements $1,2,\ldots,n.$ The presence or not of the brackets $(,\,)$ will make clear which  notation we are using. We write products from right to left, that is, if $\pi,\sigma \in S_n,$ $\pi \sigma$ is the permutation obtained by applying  $\sigma$ first.

Given a set $A \subseteq S_n,$ the symbol $\langle A\rangle$ will denote the group generated by $A.$ 

We will use the following notation for special elements of $S_n.$
\begin{itemize}
\item the identity permutation will be denoted by $\id_n,$ $$\id _n=1\,2\,\ldots \,n,$$
\item the decreasing permutation will be denoted by $\psi_n,$ $$\psi_n= n\quad n-1\,\ldots\,2\;  1=\prod_{i=1}^{\lfloor\frac{n}{2}\rfloor} (i,\; n+1-i).$$
\end{itemize}
We will use the symbols $\id$ and $\psi$ when the size is clear from the context. 

If $\pi\in S_n$ we will say that $\pi$ has \textit{length} $n$ and write $|\pi|=n.$ A permutation $\pi$ such that $\pi=\pi^{-1}$ is called an \textit{involution}. 

The \textit{reverse}, the \textit{complement} and \textit{reverse-complement} of the permutation $\pi$
are the permutations $\pi^r=\pi \psi,$ $\pi^c=\psi \pi$ and $\psi^{rc}=\psi \pi\psi,$ respectively.  

A permutation $\pi\in S_n$ \textit{contains the pattern} $\tau\in S_k$ if there are indices $i_1,i_2,\ldots,i_k$ such that the subsequence $\pi_{i_1}\,\pi_{i_2}\,\ldots\,\pi_{i_k}$
is order isomorphic to $\tau,$ and we write $\pi\geq \tau.$

Otherwise we say that $\pi$ \textit{avoids the pattern}  $\tau.$ 

The set of permutations avoiding all the patterns in the set $T=\{\tau_1,\tau_2,\ldots\}$ is denoted by $S_n(T)$ or, with a slight abuse of notation, by $S_n(\tau_1,\tau_2,\ldots).$

We set $S(T)=\cup_{n\geq 0} S_n(T).$

Notice that the set $S$ with the containment relation $\leq$ is a poset. A downset is this order is called a \textit{permutation class} or simply a \textit{class}. Given a class $\mathcal{C},$ let $\mathcal{C}_n=\mathcal{C}\cap S_n$ for every $n\geq 0.$
A class can be specified by its \textit{basis}, the minimal permutations not in the class.
If $T$ is the basis of a class $\mathcal{C},$ then $\mathcal{C}=S(T).$
Observe that the basis of a class can be infinite, since the poset $(S,\leq)$ contains infinite antichains. 
% If the basis of a class $\mathcal{C}$ is finite $\{\tau_1,\tau_2,\ldots,\tau_k\}$ than $\mathcal{C}_n=S_n(\tau_1,\tau_2,\ldots,\tau_k).$

\section{Preliminary results}\label{section_lemmas}

Now we state some basic Lemmas that will be useful in the following.
\begin{lem}
\label{inclusion}
Let $T$ and $T'$ be two subsets of $S$, 
 with $T\subseteq T'$. Then $\langle S_n(T')\rangle \subseteq \langle S_n(T)\rangle$.
\end{lem}
\proof
Immediate consequence of the definition of pattern containment.  
\endproof

\begin{lem}\label{sottopatterns}
Let $\sigma_1,\ldots,\sigma_k$ and $\tau_1,\ldots,\tau_k$ be permutations of any length such that $\sigma_i\leq \tau_i$ for every $i$. Then
$\langle S_n(\sigma_1,\ldots,\sigma_k) \rangle \subseteq \langle S_n(\tau_1,\ldots,\tau_k) \rangle.$
\end{lem}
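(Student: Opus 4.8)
The plan is to reduce the whole statement to the elementary fact that pattern containment is transitive, and then to invoke the monotonicity of the generated-subgroup operation under inclusion of generating sets. Concretely, I would first establish the single-pattern version and then intersect and pass to generated groups.

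First I would record the single-pattern claim: if $\sigma\leq\tau$, then $S_n(\sigma)\subseteq S_n(\tau)$. To prove this, suppose $\pi\in S_n$ contains $\tau$, and fix an occurrence of $\tau$ inside $\pi$; since $\sigma\leq\tau$, there is an occurrence of $\sigma$ inside $\tau$, and composing these two subsequence selections produces an occurrence of $\sigma$ inside $\pi$. This is exactly the transitivity of the containment order: $\pi\geq\tau$ and $\tau\geq\sigma$ force $\pi\geq\sigma$. Taking the contrapositive, every $\pi$ that avoids $\sigma$ must also avoid $\tau$, which is precisely $S_n(\sigma)\subseteq S_n(\tau)$.

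Next I would intersect over the index $i$. By definition $S_n(\sigma_1,\ldots,\sigma_k)=\bigcap_{i=1}^k S_n(\sigma_i)$, and similarly for the $\tau_i$. Applying the previous step termwise gives $S_n(\sigma_i)\subseteq S_n(\tau_i)$ for each $i$, and intersecting these inclusions yields $S_n(\sigma_1,\ldots,\sigma_k)\subseteq S_n(\tau_1,\ldots,\tau_k)$. Finally I would use that the generated-subgroup operation is order-preserving: if $A\subseteq B\subseteq S_n$ then $\langle A\rangle\subseteq\langle B\rangle$, since $\langle B\rangle$ is a subgroup containing $A$ and $\langle A\rangle$ is the smallest such subgroup. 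Taking $A=S_n(\sigma_1,\ldots,\sigma_k)$ and $B=S_n(\tau_1,\ldots,\tau_k)$ then gives the stated conclusion.

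I do not expect any genuine obstacle: the entire content is the transitivity of pattern containment, which is the same principle already implicit in Lemma \ref{inclusion}, while everything else is formal. The only point that requires a moment's care is the direction of the inclusion. Because $\sigma_i\leq\tau_i$ makes avoidance of $\sigma_i$ the \emph{more} restrictive condition, the inclusion of avoidance sets runs $S_n(\sigma_i)\subseteq S_n(\tau_i)$, which is exactly the direction needed to produce the claimed inclusion $\langle S_n(\sigma_1,\ldots,\sigma_k)\rangle\subseteq\langle S_n(\tau_1,\ldots,\tau_k)\rangle$ of groups.
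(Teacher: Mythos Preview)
Your argument is correct and follows exactly the approach of the paper, which simply asserts the inclusion $S_n(\sigma_1,\ldots,\sigma_k)\subseteq S_n(\tau_1,\ldots,\tau_k)$ and deduces the result. You have merely unpacked that inclusion via transitivity of pattern containment and the monotonicity of $\langle-\rangle$, which is precisely what the paper leaves implicit.
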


\begin{proof}
This fact is a straightforward consequence of the following inclusion:
$S_n(\sigma_1,\ldots,\sigma_k) \subseteq  S_n(\tau_1,\ldots,\tau_k).$
\end{proof}

Given a set of permutations $A,$ let $A^r$ be the set $\{ \sigma^r\; |\; \sigma \in A\}.$ The sets $A^c,$ $A^{rc},$ $A^{-1}$ are defined similarly.
Notice that  $S_n(T)^r=S_n(T^r),$ and the same is true for the complement, the reverse-complement and the inverse map. Moreover, since $(\pi^r)^{-1}=(\pi^{-1})^c,$ $$(S_n(T^r))^{-1}=S_n((T^r)^{-1})=S_n((T^{-1})^c).$$
A similar identity holds with $r$ and $c$ interchanged. 

\begin{lem}\label{rc_inv}
Let $T\subseteq S$ be a set of permutations of any length. Then
$$\langle S_n(T) \rangle = \langle S_n(T^{-1}) \rangle=\langle S_n(T)\cup S_n(T^{-1}) \rangle,$$
 and $$\langle S_n(T) \rangle \cong \langle S_n(T^{rc}) \rangle.$$
\end{lem}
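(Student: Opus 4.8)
The plan is to handle the three equalities first and the isomorphism last, since the equalities rest on elementary facts about group generation while the isomorphism requires recognizing the reverse-complement as an inner automorphism.

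For the equality $\langle S_n(T)\rangle = \langle S_n(T^{-1})\rangle$, I would invoke the general principle that, for any subset $A$ of a group, $\langle A\rangle = \langle A^{-1}\rangle$, where $A^{-1}=\{a^{-1} \mid a\in A\}$: the inverses of the generators already lie in $\langle A\rangle$, and symmetrically. Combined with the remark preceding the lemma that $S_n(T)^{-1}=S_n(T^{-1})$, this gives
$$\langle S_n(T^{-1})\rangle = \langle S_n(T)^{-1}\rangle = \langle S_n(T)\rangle.$$
For the third member $\langle S_n(T)\cup S_n(T^{-1})\rangle$, I would argue by mutual inclusion. Writing $G=\langle S_n(T)\rangle=\langle S_n(T^{-1})\rangle$, both $S_n(T)$ and $S_n(T^{-1})$ are contained in $G$, so their union generates a subgroup of $G$; conversely $S_n(T)$ is contained in the union, so $G$ is contained in the group generated by the union. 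Equality of the three groups follows.

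The isomorphism $\langle S_n(T)\rangle \cong \langle S_n(T^{rc})\rangle$ is the only part requiring a genuine idea. The key observation is that the reverse-complement operation is conjugation by $\psi$: since $\pi^{rc}=\psi\pi\psi$ and $\psi$ is an involution, we have $\pi^{rc}=\psi\pi\psi^{-1}$, so the map $\phi\colon S_n\to S_n$ sending $\pi$ to $\pi^{rc}$ is an inner automorphism of $S_n$. By the remark before the lemma, $\phi$ sends $S_n(T)$ onto $S_n(T^{rc})$. Since $\phi$ is an automorphism, it carries the subgroup generated by any set onto the subgroup generated by the image of that set, so
$$\phi\bigl(\langle S_n(T)\rangle\bigr)=\langle \phi(S_n(T))\rangle=\langle S_n(T^{rc})\rangle,$$
and restricting $\phi$ to $\langle S_n(T)\rangle$ yields the desired isomorphism onto $\langle S_n(T^{rc})\rangle$.

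I do not anticipate a serious obstacle: the whole statement reduces to two standard facts about group generation together with the precomputed identities $S_n(T)^{-1}=S_n(T^{-1})$ and $S_n(T)^{rc}=S_n(T^{rc})$. The one point deserving care is the verification that $\psi^{-1}=\psi$, so that the reverse-complement is genuinely a conjugation rather than merely a bijection of $S_n$; this is immediate from $\psi$ being the product of the disjoint transpositions $(i,\,n+1-i)$, which makes the inner-automorphism argument legitimate.
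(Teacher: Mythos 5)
Your proof is correct and follows essentially the same route as the paper: the equalities follow from $S_n(T)^{-1}=S_n(T^{-1})$ and the fact that a set and its set of inverses generate the same subgroup, while the isomorphism comes from recognizing $\pi^{rc}=\psi\pi\psi=\psi\pi\psi^{-1}$ as conjugation by the involution $\psi$, i.e.\ an inner automorphism carrying $S_n(T)$ onto $S_n(T^{rc})$. The paper's proof states exactly this idea, only far more tersely; you have merely filled in the routine details.
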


\begin{proof}
Given $\pi \in S_n,$ $\pi^{rc}=\psi \pi  \psi.$ 
This fact implies the second assertion. The first assertion is trivial.
\end{proof}

\begin{lem}\label{rcifnotpsi}
Let $T\subseteq S$ be a set of permutations of any length. If $\psi_k \notin T$ for every $k\geq 1,$ then $$\langle S_n(T)\rangle =\langle S_n(T)\cup S_n(T^r)\cup S_n(T^c)\cup S_n(T^{rc})\rangle.$$
\end{lem}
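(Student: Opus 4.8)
The plan is to establish only the nontrivial inclusion, since $S_n(T)$ is one of the generating sets appearing on the right-hand side, so $\langle S_n(T)\rangle$ is automatically contained in the group on the right. To obtain the reverse containment it suffices to check that each of the three sets $S_n(T^r)$, $S_n(T^c)$, $S_n(T^{rc})$ lies inside $\langle S_n(T)\rangle$: the union of generating sets then generates a subgroup of $\langle S_n(T)\rangle$, and together with the trivial inclusion this yields equality. Everything will hinge on showing that the decreasing permutation $\psi_n$ is itself a generator, i.e. that $\psi_n\in S_n(T)$.

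For that key step I would argue as follows. The permutation $\psi_n$ is decreasing, so any of its subsequences $\psi_{i_1}\cdots\psi_{i_k}$ is decreasing as well, and hence the only patterns $\psi_n$ can contain are the decreasing permutations $\psi_k$ with $k\leq n$. Consequently $\psi_n$ fails to avoid a pattern $\tau$ only if $\tau=\psi_k$ for some $k$. By hypothesis no such $\psi_k$ belongs to $T$, so $\psi_n$ avoids every pattern of $T$; that is, $\psi_n\in S_n(T)\subseteq\langle S_n(T)\rangle$. This is exactly the point where the assumption $\psi_k\notin T$ is used, and I expect it to be the crux of the argument, even though the verification is short.

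Once $\psi:=\psi_n\in\langle S_n(T)\rangle$ is available, the remaining steps are routine group-theoretic bookkeeping based on the set identities recalled just before the statement. Writing $S_n(T^r)=S_n(T)^r=\{\sigma\psi : \sigma\in S_n(T)\}$, $S_n(T^c)=S_n(T)^c=\{\psi\sigma : \sigma\in S_n(T)\}$, and $S_n(T^{rc})=S_n(T)^{rc}=\{\psi\sigma\psi : \sigma\in S_n(T)\}$, each element of these three sets is a product of members of $S_n(T)\cup\{\psi\}$, all of which already lie in $\langle S_n(T)\rangle$. Hence the three sets are contained in $\langle S_n(T)\rangle$, and the proof concludes as described in the first paragraph.
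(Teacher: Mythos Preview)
Your proof is correct and follows exactly the same approach as the paper's: observe that the hypothesis forces $\psi_n\in S_n(T)$, and then use the identities $\sigma^r=\sigma\psi$, $\sigma^c=\psi\sigma$, $\sigma^{rc}=\psi\sigma\psi$ to conclude that $S_n(T^r)$, $S_n(T^c)$, $S_n(T^{rc})$ all lie in $\langle S_n(T)\rangle$. The paper's version is simply more terse, but the content is identical.
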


\proof

Notice that if $\psi_k \notin T$ for every $k\geq 1,$ then $\psi_n \in S_n(T).$  

Since $\sigma^r=\sigma  \psi,$ $\sigma^c=\psi \sigma$ and $\sigma^{rc}=\psi \sigma  \psi,$ the assertion follows.

\endproof

% \todo{ATTENZIONE  questo NON implica che $\langle S_n(T)\rangle =\langle S_n(T^r)\rangle$ perch\'e non \'e detto che nel secondo insieme ci sia $\psi$ con cui giocare}

% \todo{sono in dubbio se nella precedente sfilza di generatori aggiungere anche $T^{-1},$ $(T^{r})^{-1}$ ecc}

We recall a well-known result that presents some families of generating sets for the group $S_n.$

\begin{lem}\label{gen_sets}
The following sets generate $S_n,$ for every $n\geq 1.$
\begin{itemize}
    \item $\{(j,\;j+1),\;1\leq j\leq n-1\}.$
    \item $\{(1,\;j),\;2\leq j\leq n\}.$ 
    \item $\{(a,\;b),\;(1,\;2,\ldots ,n)\}$ if $b-a$ and $n$ are coprime.
    
\end{itemize}
\end{lem}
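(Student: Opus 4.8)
The plan is to verify each of the three generating sets separately, since these are classical facts whose proofs rely on a common strategy: show that an already-known generating set can be reconstructed from the proposed one.

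For the first set, the adjacent transpositions $(j,\,j+1)$, I would prove that every transposition $(a,\,b)$ with $a<b$ lies in the generated group, since the full set of transpositions is well known to generate $S_n$ (every permutation being a product of transpositions). The key identity is the conjugation relation $(j,\,j+2)=(j+1,\,j+2)(j,\,j+1)(j+1,\,j+2)$, and more generally one builds $(a,\,b)$ by an inductive ``bubbling'' argument: conjugating $(a,\,a+1)$ successively by adjacent transpositions shifts one endpoint outward one step at a time. A clean induction on $b-a$ then shows every transposition is obtained, so the adjacent transpositions generate $S_n$.

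For the second set, the star transpositions $(1,\,j)$, I would again show that every transposition $(a,\,b)$ is expressible, using the single conjugation identity $(a,\,b)=(1,\,a)(1,\,b)(1,\,a)$ (valid for $a,b\neq 1$, with the $a=1$ case being immediate). Since every transposition is recovered and transpositions generate $S_n$, we are done. For the third set, $\{(a,\,b),\,(1,\,2,\ldots,n)\}$ with $\gcd(b-a,\,n)=1$, the idea is to let $c$ denote the $n$-cycle and observe that conjugating the transposition $(a,\,b)$ by powers of $c$ produces the transpositions $(a+k,\,b+k)$ with indices read modulo $n$; since $\gcd(b-a,\,n)=1$, the displacements $k(b-a)$ run through all residues, so after reducing to $(a,\,b)$-type pairs whose gap is a fixed unit $d=b-a$ one obtains a set of transpositions $\{(i,\,i+d) \bmod n\}$ that, together with the cycle, is known to generate $S_n$; the coprimality is exactly what guarantees these transpositions link all $n$ points into a single orbit so that the generated transposition graph is connected.

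The main obstacle, and the only part requiring genuine care, is the third item: one must track the index arithmetic modulo $n$ correctly and invoke the standard criterion that a set of transpositions generates $S_n$ precisely when the associated graph on $[n]$ (with an edge for each transposed pair) is connected. The coprimality hypothesis $\gcd(b-a,\,n)=1$ feeds directly into this connectivity, since the conjugates by powers of the $n$-cycle trace out a single cycle through all vertices. Because this lemma is stated as a well-known result, I would keep the exposition brief, citing the connectivity criterion rather than reproving it, and present the first two items as short inductions and the third as the coprimality-connectivity argument just sketched.
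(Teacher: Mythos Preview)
Your proposal is correct, but note that the paper does not actually prove this lemma: it introduces the statement with ``We recall a well-known result'' and provides no argument whatsoever. So there is no proof in the paper to compare against; you have supplied more than the authors do. Your sketches for all three items are standard and accurate---in particular, for the third item the conjugation $c^k(a,b)c^{-k}=(a+k,b+k)$ together with the transposition-graph connectivity criterion (connected iff $\gcd(b-a,n)=1$) is exactly the right mechanism.
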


% \proof
% The first and second of these generating sets for the symmetric group are well known.

% The proof that also the third one is a  generating set 
%  can be found, e.g., in \cite{Conrad_gen}.

% \endproof

In the following we will use the notion 
of \textit{direct product} and \textit{semidirect product} of groups (we refer to \cite{isaacs2008finite} for definitions and notations).
In particular we will exploit the following result stated and proved in \cite[p. 69]{isaacs2008finite}.
\begin{lem}\label{sdp}
If $G$ is a group, $G$ is the semidirect product of its subgroups $N$ and $H$ if and only if the following three conditions are verified
\begin{itemize}
    \item $N\unlhd G,$
    \item $G=NH,$
    \item $N\cap H=\{ id\}.$
\end{itemize}
\end{lem}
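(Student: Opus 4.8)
The plan is to read the statement as the standard internal recognition criterion: saying that $G$ \emph{is the semidirect product} of its subgroups $N$ and $H$ means that $G$ is isomorphic, via the multiplication map, to an external semidirect product $N \rtimes_\varphi H$ associated to some action $\varphi\colon H \to \mathrm{Aut}(N)$. With this reading the equivalence splits into the two implications, and the content lives entirely in the backward direction.

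For the forward implication I would assume $G \cong N \rtimes_\varphi H$ and transport the two canonical subgroups $N \times \{\id\}$ and $\{\id\} \times H$ across the isomorphism. A direct computation with the product rule $(n_1,h_1)(n_2,h_2) = (n_1\,\varphi_{h_1}(n_2),\, h_1 h_2)$ shows that conjugation preserves the first coordinate, so $N \times \{\id\}$ is normal; that $(n,h) = (n,\id)(\id,h)$, so $G = NH$; and that the two factors intersect only in $(\id,\id)$. This is the routine bookkeeping direction and requires no cleverness.

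The converse is where the three hypotheses earn their keep, and it is the step I would write out. Using $N \unlhd G$, define $\varphi\colon H \to \mathrm{Aut}(N)$ by $\varphi_h(n) = h n h^{-1}$; normality guarantees $\varphi_h(n)\in N$ and that each $\varphi_h$ is an automorphism, and one checks $\varphi$ is a homomorphism. Then set $\Phi\colon N \rtimes_\varphi H \to G$, $\Phi(n,h) = nh$. The one computation worth recording is that $\Phi$ is a homomorphism: expanding $\Phi\bigl((n_1,h_1)(n_2,h_2)\bigr) = n_1\,\varphi_{h_1}(n_2)\,h_1 h_2 = n_1 (h_1 n_2 h_1^{-1}) h_1 h_2 = n_1 h_1 n_2 h_2$, the conjugating factors telescope to give $\Phi(n_1,h_1)\Phi(n_2,h_2)$. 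Surjectivity of $\Phi$ is precisely the hypothesis $G=NH$, and injectivity follows from $N \cap H = \{\id\}$, since $\Phi(n,h)=nh=\id$ forces $n = h^{-1} \in N \cap H$, hence $n=h=\id$. Thus $\Phi$ is an isomorphism and $G$ is the semidirect product of $N$ and $H$.

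The main obstacle here is conceptual rather than computational: one must fix the convention for what ``semidirect product'' abbreviates, because under the purely internal definition the statement is nearly tautological, whereas under the external definition it becomes the genuine recognition theorem whose substance is exactly the construction of $\varphi$ and the verification that $\Phi$ is an isomorphism. Since the paper quotes the result from \cite{isaacs2008finite}, I would present the homomorphism check for $\Phi$ as the single decisive step and defer the remaining verifications to that reference.
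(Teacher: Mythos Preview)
Your argument is correct and is the standard proof of the internal recognition criterion for semidirect products. Note, however, that the paper itself supplies no proof of this lemma at all: it simply cites \cite[p.~69]{isaacs2008finite} and states the result. So there is nothing to compare against beyond observing that what you wrote is precisely the textbook argument one would find at the cited reference, and your final sentence already acknowledges this by deferring the routine checks there.
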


Finally we recall a lemma which expresses a connection between the notion of pattern containment and of permutation group. See \cite{LehtonenPermutationGP} for the proof.  

\begin{lem}\label{group_patterns}
Let $G$ be any subgroup of a symmetric group $S_k.$
Then $S_n(S_k\setminus G)$ is a subgroup of $S_n.$
\end{lem}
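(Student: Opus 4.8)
The plan is to first translate the avoidance condition into a statement about the length-$k$ patterns occurring in a permutation, and then to exploit a multiplicativity property of such patterns under composition. For $\pi\in S_n$ and a set of positions $I=\{i_1<\cdots<i_k\}\subseteq[n]$, let $\mathrm{pat}_I(\pi)\in S_k$ denote the permutation order isomorphic to the subword $\pi_{i_1}\,\pi_{i_2}\,\cdots\,\pi_{i_k}$. Unravelling the definitions, $\pi\in S_n(S_k\setminus G)$ holds exactly when $\pi$ contains no length-$k$ pattern lying outside $G$, i.e.\ when $\mathrm{pat}_I(\pi)\in G$ for every $k$-subset $I\subseteq[n]$. (If $n<k$ there are no such $I$, the condition is vacuous, and $S_n(S_k\setminus G)=S_n$ is already a group, so I may assume $n\geq k$.)

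The key step is the following composition formula, which I would prove by a direct index chase: for all $\pi,\sigma\in S_n$ and every $k$-subset $I\subseteq[n]$,
$$\mathrm{pat}_I(\pi\sigma)=\mathrm{pat}_{\sigma(I)}(\pi)\cdot\mathrm{pat}_I(\sigma),$$
where $\sigma(I)=\{\sigma_i\,:\,i\in I\}$. To see this, recall that $(\pi\sigma)_i=\pi_{\sigma_i}$ since products are read right to left, so the entries of $\pi\sigma$ on $I$ are $\pi_{\sigma_{i_1}},\ldots,\pi_{\sigma_{i_k}}$, namely the entries of $\pi$ read off at the positions $\sigma_{i_1},\ldots,\sigma_{i_k}$. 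Writing $\alpha=\mathrm{pat}_I(\sigma)$, the rank of $\sigma_{i_a}$ within $\sigma(I)$ equals $\alpha_a$, so $\sigma_{i_a}$ is the $\alpha_a$-th smallest element of $\sigma(I)$; and if $\beta=\mathrm{pat}_{\sigma(I)}(\pi)$ records the relative order of the entries of $\pi$ on $\sigma(I)$ listed in increasing position order, then the rank of $\pi_{\sigma_{i_a}}$ among the $\pi_{\sigma_{i_b}}$ is precisely $\beta_{\alpha_a}$. Hence $\mathrm{pat}_I(\pi\sigma)_a=\beta_{\alpha_a}=(\beta\alpha)_a$, which is the claimed identity.

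With the formula in hand, the group axioms follow quickly. If $\pi,\sigma\in S_n(S_k\setminus G)$, then for every $k$-subset $I$ both $\mathrm{pat}_{\sigma(I)}(\pi)$ and $\mathrm{pat}_I(\sigma)$ lie in $G$; since $G$ is closed under multiplication, their product $\mathrm{pat}_I(\pi\sigma)$ lies in $G$, whence $\pi\sigma\in S_n(S_k\setminus G)$. The identity belongs to the set because every length-$k$ pattern of $\id_n$ equals $\id_k\in G$. Finally, $S_n(S_k\setminus G)$ is a nonempty subset of the finite group $S_n$ that is closed under composition, hence a subgroup; alternatively, closure under inverses can be read off directly from the fact that $\pi$ contains $\tau$ if and only if $\pi^{-1}$ contains $\tau^{-1}$, so the length-$k$ patterns of $\pi^{-1}$ are the inverses of those of $\pi$ and therefore again lie in $G$.

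I expect the single genuine obstacle to be the careful verification of the composition formula: one must keep positions and values separate and track ranks through the substitution $(\pi\sigma)_i=\pi_{\sigma_i}$, taking care with the right-to-left convention so that it is $\pi$ (not $\sigma$) whose pattern is evaluated on the shifted position set $\sigma(I)$. Once this multiplicative behaviour of patterns is correctly pinned down, the closure of $G$ under the group operations transfers verbatim to $S_n(S_k\setminus G)$.
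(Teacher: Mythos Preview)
Your argument is correct. The composition formula $\mathrm{pat}_I(\pi\sigma)=\mathrm{pat}_{\sigma(I)}(\pi)\cdot\mathrm{pat}_I(\sigma)$ is verified cleanly, and from it closure under products (and hence the subgroup property in the finite group $S_n$) follows immediately.

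As for comparison with the paper: the paper does not actually prove this lemma; it simply cites \cite{LehtonenPermutationGP} for the proof. Your write-up therefore supplies what the paper omits. The route you take---reducing the avoidance condition to ``every length-$k$ pattern lies in $G$'' and then using multiplicativity of the pattern map---is the standard one in that literature, so there is no methodological divergence to report. One small stylistic remark: the alternative inverse argument via ``$\pi$ contains $\tau$ iff $\pi^{-1}$ contains $\tau^{-1}$'' is redundant once you invoke finiteness, but it does no harm.
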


\section{Generating a fixed symmetric group $S_k.$}\label{section_Sk}

First of all we ask the following question. 

\begin{question}\label{first_q}
Let $G$ be any finite group. Is it possible to find a subset $T$ of $S$ and a positive integer $n$ such that $\langle S_n(T) \rangle=G?$ 
\end{question}

Notice that if $T$ can depend on $n,$ then the answer to the previous question is trivially true for every $G$. In fact, if $G$ is a finite group, by Cayley's theorem there exists $n$ such that there is an isomorphic copy $\widehat G$ of $G$ into $S_n.$ Then $$G\cong \langle S_n(S_n\setminus \widehat G)\rangle=S_n(S_n\setminus \widehat G).$$

If the set of patterns $T$ must be independent of $n$ the question is more subtle.

The next theorem  shows that the question can be answered affirmatively for $G=S_k,$ for any $k.$ 

\begin{thm}\label{generatingSk}
We have
$$\langle S_n(132,231,321,k\,1\,2\,\ldots\,k-2\quad k-1) \rangle \cong S_{k-1}\quad \forall n\geq k.$$
\proof
The set $S_n(132,231,321,k\,1\,2\,\ldots\,k-2\quad k-1)$ contains precisely the permutations 
$\id$ and $t\,1\,2\,3\,\ldots\,t-1\quad t+1\,\ldots \,n-1\quad n$
for $2\leq t\leq k-1.$
In fact, if $\pi$ is in $S_n(132,231,321,k\,1\,2\,\ldots\,k-2\quad k-1),$ the first symbol $t=\pi_1$ must be smaller or equal to $k-1$ in order to avoid the patterns $321$ and $k\,1\,2\,\ldots\,k-2\quad k-1.$ If $t=1,$ $\pi=\id$ because the other symbols must be in increasing order, since $\pi$ avoids $132.$
If $t\geq 2,$ the second symbol $\pi_2$ must be equal to 1 in order to avoid $231$ and $321.$ All the other symbols must be in increasing order, in order to avoid $132.$

In particular $S_n(132,231,321,k\,1\,2\,\ldots\,k-2\quad k-1)$ consists of the identity and the cycles $(1,\,2),$ $(1,\,3,\,2),$ $(1,\,4,\,3,\,2),$ $\ldots$ $(1\,k-1\quad k-2\,\ldots \,2).$ Hence, it generates a group isomorphic to $S_{k-1}.$

\endproof

\end{thm}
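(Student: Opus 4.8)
The plan is to characterize the set $S_n(132,231,321,k\,1\,2\,\ldots\,k-2\quad k-1)$ explicitly and then recognize the group it generates. First I would determine exactly which permutations avoid all four patterns simultaneously, and I expect the answer to be a very small, highly constrained family. The key observation is that avoiding $321$ and $k\,1\,2\,\ldots\,k-2\quad k-1$ together forces the first entry $\pi_1=t$ to be small: if $t\geq k$ then together with any two smaller values appearing later we would build either a $321$ or the long pattern. So I would argue $t\leq k-1$.

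Next I would split into cases on $t=\pi_1$. If $t=1$, then since $\pi$ avoids $132$ and begins with its smallest possible value, the remaining entries must be increasing, forcing $\pi=\id$. If $t\geq 2$, I would show the second entry must be $\pi_2=1$: any value strictly between $1$ and $t$ in position $2$ (or a larger value followed later by something that drops below $t$) would create a forbidden $231$ or $321$ using $t$ as the large first element. Once $\pi_1=t$ and $\pi_2=1$ are pinned down, avoidance of $132$ forces the tail $\pi_3\pi_4\cdots\pi_n$ to be increasing, and the only way to place the values $\{2,3,\ldots,t-1,t+1,\ldots,n\}$ increasingly is in their natural order. This yields exactly the permutation $t\,1\,2\,3\,\ldots\,t-1\quad t+1\,\ldots\,n,$ which in cycle notation is the $t$-cycle $(1,\,t,\,t-1,\ldots,\,2)$.

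Having established that the avoidance set consists precisely of $\id$ together with the cycles $(1,2),(1,3,2),\ldots,(1\,k-1\,\ldots\,2)$, I would then identify the group they generate. Each such cycle fixes all points $\geq k$, so the generated group is contained in the copy of $S_{k-1}$ acting on $\{1,\ldots,k-1\}$. Conversely, the transposition $(1,2)$ together with the full cycle $(1,k-1,k-2,\ldots,2)$ generate all of $S_{k-1}$: this is exactly the third generating set in Lemma~\ref{gen_sets} (a transposition of adjacent-in-cycle elements together with a $(k-1)$-cycle), so the group is isomorphic to $S_{k-1}$ for every $n\geq k$.

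The main obstacle I anticipate is the careful case analysis establishing that no other permutations avoid all four patterns — in particular, rigorously ruling out entries between the positions already determined and confirming that the tail must be strictly increasing in natural order. The group-theoretic conclusion is then immediate from Lemma~\ref{gen_sets}, so essentially all the work lies in the combinatorial characterization of the avoidance set.
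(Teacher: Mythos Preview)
Your proposal is correct and follows essentially the same approach as the paper: the same characterization of the avoidance set via the case analysis on $\pi_1=t$, and the same identification of the resulting cycles. The only difference is that you explicitly justify the final generation step via Lemma~\ref{gen_sets}, whereas the paper simply asserts that these cycles generate $S_{k-1}$.
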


\section{Alternating groups and abelian groups}\label{section_alternating}

In this section we consider a refinement of Question \ref{first_q}.

\begin{question}\label{question_fixed_G}
Let $G$ be any finite group. Is it possible to find a subset $T$ of $S$ and a positive integer $n$ such that $\langle S_n(T) \rangle=G$  for every $n\geq n_0?$
\end{question}

We give a negative answer to the previous question for some families of groups. 

We need some preliminary results. Firstly notice that, if  $\langle S_n(T)\rangle$ is isomorphic to a fixed group $G$ for every $n\geq n_0,$ the cardinality of $S_n(T)$  must be bounded above by the order of $G.$ 

The set of pattern avoiding permutations with limited growth rate are well studied. See e.g. \cite{AlbertGeometricGC,Albert2006PermutationCO,HombergerOnTE, Huczynska2006GridCA, Kaiser2003OnGR,Pantone2020GrowthRO,VatterSmallPC,VatterPERMUTATIONCO}, to cite only a few.

Now we recall some results about classes of permutations with limited growth rate. First of all we introduce some definitions, following \cite{HombergerOnTE}.

An \textit{interval} in a permutation is a sequence of contiguous entries whose values form
an interval of natural numbers. A \textit{monotone interval} is an interval in which the entries are increasing or decreasing. Given a permutation $\sigma$ of length $m$ and nonempty permutations $\alpha_1,\ldots,\alpha_m,$ the \textit{inflation} of $\sigma$ by $\alpha_1,\ldots,\alpha_m$ is the permutation $\pi=\sigma[\alpha_1,\ldots,\alpha_m]$ obtained by replacing the $i$-th entry of $\sigma$ by an interval that is order isomorphic to $\alpha_i$, while maintaining the relative
order of the intervals themselves. For example,
$$3142[1, 321, 1, 12] = 6\; 321\; 7\; 45.$$
Notice that we allow  inflations by the empty permutation.

A \textit{peg permutation} $\tilde \rho$ is a permutation $\rho$ where some elements are labeled by the symbol $+$ and some other elements are labeled by the symbol $-.$
For example $\tilde \rho =3^{+}1^{-}24^{-}$ is a peg permutation whose underlying (non-pegged) permutation is $3124.$

The \textit{grid class} of the peg permutation $\tilde \rho$, where $\rho=\rho_1\ldots\rho_n,$ denoted by $\Grid(\tilde \rho)$, is the set of all permutations which may
be obtained inflating $\rho$ by monotone intervals of type determined by the signs of $\tilde \rho:$ $\rho_i$ may be
inflated by an increasing (resp., decreasing) possibly empty interval if $ \rho_i$ is labeled with a $+$ (resp., $-$) while it
may only be inflated by a single entry (or the empty permutation) if $ \rho_i$ is not labeled. 

For example the permutation $45621387$ is an element of the grid class $\Grid(3^+1^-24^-).$

Given a set $\tilde P$ of peg permutations, we denote the union of their corresponding grid classes by $$\Grid(\tilde P)=\cup_{\tilde \rho \in P}\Grid(\tilde \rho).$$

We will need the following result (\cite[Thm. 1.3]{HombergerOnTE}, which in turn is a consequence of some results in \cite{AlbertGeometricGC} and in \cite{Huczynska2006GridCA}).

\begin{thm}
For a permutation class $\mathcal{C}$ the following are equivalent:
\begin{enumerate}
    \item The cardinality $|\mathcal{C}_n|$ is a polynomial in $n,$ for sufficiently large $n.$
    \item $|\mathcal{C}_n|<F_n$ for some $n,$ where $F_n$ is the $n$-th Fibonacci numbers.
    \item $\mathcal{C}=\Grid(\tilde P),$ where $\tilde P$ is a finite set of peg permutations.
\end{enumerate}
\end{thm}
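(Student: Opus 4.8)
The plan is to prove the equivalence as the cycle $(1)\Rightarrow(2)\Rightarrow(3)\Rightarrow(1)$, where the content is almost entirely concentrated in $(2)\Rightarrow(3)$. The implication $(1)\Rightarrow(2)$ is immediate: if $|\mathcal C_n|$ coincides with a polynomial $p(n)$ for all large $n$, then since $F_n$ grows like $\phi^n$ with $\phi=(1+\sqrt5)/2$, we have $p(n)<F_n$ for every sufficiently large $n$, and any such $n$ witnesses condition (2).

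For $(3)\Rightarrow(1)$ I would count directly. Fix a single peg permutation $\tilde\rho$ of length $m$ with, say, $k$ labeled entries. A member of $\Grid(\tilde\rho)$ of length $n$ is obtained by inflating each labeled entry by a possibly empty monotone interval and each unlabeled entry by a single point or the empty permutation; the interval lengths are non-negative integers summing to $n$, while the unlabeled entries contribute only a bounded $0/1$ correction. The number of such length vectors is a polynomial in $n$ of degree $k-1$, and the finitely many coincidences (distinct vectors yielding the same permutation when adjacent monotone cells merge) affect only lower-order terms, so $|\Grid(\tilde\rho)_n|$ is eventually polynomial. Since an intersection of grid classes of peg permutations is again a finite union of such grid classes, inclusion--exclusion expresses $|\Grid(\tilde P)_n|$ as an alternating sum of eventually-polynomial quantities, hence itself a polynomial.

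The heart of the argument is $(2)\Rightarrow(3)$, for which I would follow the route of Kaiser--Klazar and Huczynska--Vatter. The first step is the \emph{Fibonacci dichotomy}: the purely numerical hypothesis $|\mathcal C_n|<F_n$ for a single $n$ already forces the enumeration to be eventually polynomial. One proves this by encoding each permutation of $\mathcal C$ through its decomposition into maximal monotone runs and showing, via a pigeonhole/recursive argument, that once the count dips below the Fibonacci threshold the admissible ``merge patterns'' become rigid and repeat, pinning the sequence down to polynomial behaviour. The second step upgrades this to the structural statement: polynomial growth forces a uniform bound on the number of monotone blocks needed to describe any member of $\mathcal C$, and on the way these blocks may interleave, so that every permutation of $\mathcal C$ fits one of finitely many templates; recording each template together with the increasing/decreasing type of its blocks yields exactly a finite set $\tilde P$ of peg permutations with $\mathcal C=\Grid(\tilde P)$.

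I expect $(2)\Rightarrow(3)$ to be the main obstacle, and within it the delicate point is the passage from a bound on the \emph{number} of permutations to a uniform bound on their \emph{structure} --- that is, showing that the numerical constraint propagates to a global gridding with boundedly many cells. Controlling this propagation, rather than the counting in $(3)\Rightarrow(1)$ or the triviality of $(1)\Rightarrow(2)$, is where the real work lies.
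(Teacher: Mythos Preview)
The paper does not prove this theorem at all: it is quoted verbatim as \cite[Thm.~1.3]{HombergerOnTE}, with the remark that it in turn rests on results of \cite{AlbertGeometricGC} and \cite{Huczynska2006GridCA}. So there is no ``paper's own proof'' to compare your proposal against; the authors simply import the statement as a black box and immediately use it to derive Proposition~\ref{bounded_classes_structure}.

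That said, your sketch is a faithful high-level outline of how the result is actually established in the cited literature. The implication $(1)\Rightarrow(2)$ is indeed trivial, the counting argument for $(3)\Rightarrow(1)$ via inclusion--exclusion over peg permutations (using that intersections of such grid classes are again finite unions of them) is exactly the approach in Homberger--Vatter, and your identification of $(2)\Rightarrow(3)$ as the substantive step, going through the Kaiser--Klazar Fibonacci dichotomy and then the Huczynska--Vatter structural upgrade to a bounded gridding, matches the genuine architecture of the proof. You have correctly located the crux: passing from a numerical bound on $|\mathcal C_n|$ to a uniform bound on the number and arrangement of monotone blocks is where the real work happens, and it is not something one can write down in a paragraph --- which is precisely why the present paper chooses to cite rather than reprove it.
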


\begin{example}

Consider the class $\mathcal{C}=S(123, 231, 312).$ As we will see in the proof of Theorem \ref{threeofthree}, 
$$S(123, 231, 312)=\{k\quad k-1\;\ldots\;2\;1\;n\quad n-1\;\ldots\;k+1,\quad 1\leq k\leq n\}.$$

%The cardinality of $\mathcal{C}_n$ is clearly equal to $n,$ for every $n.$
The class $\mathcal{C}$ can be written as $\mathcal{C}=\Grid(\tilde P),$ where $\tilde P$ contains only the peg permutation $1^-2^-.$ 
\end{example}

An immediate consequence of the previous Theorem is the following Proposition that describes the structure of permutation classes with growth rate bounded above by a constant.

\begin{prop}\label{bounded_classes_structure}
For a permutation class $\mathcal{C}$ the following are equivalent.
\begin{enumerate}
    \item For every $n\geq 1,$ $|\mathcal{C}_n|<K$ where $K$ is a constant independent of $n.$
    \item $\mathcal{C}=\Grid(\tilde P),$ where $\tilde P$ is a finite set of peg permutations each of which has at most one labeled element.
     \item For sufficiently large $n,$ $|\mathcal{C}_n|=D,$ where $D$ is a constant independent of $n.$ 
    \end{enumerate}
\end{prop}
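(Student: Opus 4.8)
The plan is to establish the cycle of implications by combining the Theorem quoted above with two elementary counting arguments, and I will begin by disposing of the equivalence of the first and third conditions, which in fact needs no grid structure at all. The third condition trivially implies the first: if $|\mathcal{C}_n|=D$ for all $n\geq n_0$, then every $|\mathcal{C}_n|$ is bounded by the finite number $\max\{D,|\mathcal{C}_0|,\ldots,|\mathcal{C}_{n_0-1}|\}$, so the first condition holds with a suitable $K$. For the converse, suppose $|\mathcal{C}_n|<K$ for all $n$. Since the Fibonacci numbers tend to infinity, there is an $n$ with $F_n>K$, whence $|\mathcal{C}_n|<F_n$; by the quoted Theorem this Fibonacci-bound condition forces $|\mathcal{C}_n|$ to coincide with a polynomial $p(n)$ for all large $n$. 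A polynomial that stays bounded must be constant, so $|\mathcal{C}_n|=D$ eventually, which is the third condition.

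Next I will prove that the second condition implies the first by a direct count. Suppose $\mathcal{C}=\Grid(\tilde P)$ with $\tilde P$ finite and each peg permutation carrying at most one labeled entry, and fix $\tilde\rho\in\tilde P$ whose underlying permutation has length $m$. A permutation of $\Grid(\tilde\rho)_n$ is completely determined by choosing, for each unlabeled position, whether it is inflated by a single entry or by the empty permutation, and by inflating the unique labeled position (if one exists) by a monotone interval whose length is then forced to equal $n$ minus the number of present unlabeled entries. This produces at most $2^{m}$ permutations of each length $n$, a bound independent of $n$. Summing over the finitely many peg permutations of $\tilde P$ gives $|\mathcal{C}_n|\leq |\tilde P|\cdot 2^{M}$, where $M$ is the maximal length of an underlying permutation, so the first condition holds.

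The remaining, and only delicate, implication is that the first condition implies the second. The quoted Theorem applied to the Fibonacci-bound condition already yields a representation $\mathcal{C}=\Grid(\tilde P)$ with $\tilde P$ finite; the difficulty is that the peg permutations may carry several labels. The crucial observation is that many labels need not entail a large grid class, because adjacent labeled entries of the same monotonicity that are consecutive in both position and value merge into a single monotone interval, so that for example $\Grid(1^{+}2^{+})$ is merely the class of increasing permutations. I will therefore \emph{clean} each peg permutation by repeatedly merging such maximal same-sign runs into one labeled entry; this leaves the associated grid class unchanged, giving $\mathcal{C}=\Grid(\tilde P')$ for the cleaned family. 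The main obstacle is the quantitative statement behind the cleaning: once no further merge is possible, two distinct labeled entries can be inflated by independently growing intervals, and because all merges have been removed, distinct choices of their lengths at a fixed total yield distinct permutations, so a cleaned peg permutation with at least two labels already produces a grid class of at least linear growth. Since $\Grid(\tilde\rho)\subseteq\mathcal{C}$ for every $\tilde\rho\in\tilde P'$ and $\mathcal{C}$ is bounded, each cleaned peg permutation must have at most one label, which is exactly the second condition. I expect the verification that non-mergeable labeled pairs force genuinely distinct permutations — that cleaning captures every source of non-independence among the growing intervals — to be the technical heart of this direction, and it is precisely where the structural results on reduced peg permutations of Huczynska--Vatter and Homberger are brought to bear.
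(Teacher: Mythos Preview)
Your argument is sound and in fact more careful than the paper's in the key implication $1\Rightarrow 2$. The paper proves the cycle $1\Rightarrow 2\Rightarrow 3\Rightarrow 1$; your route differs slightly in that you establish $1\Leftrightarrow 3$ directly (via the pleasant observation that a bounded polynomial must be constant), prove $2\Rightarrow 1$ by the same elementary count the paper uses for $2\Rightarrow 3$, and then handle $1\Rightarrow 2$.

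For $1\Rightarrow 2$ the paper simply takes the finite $\tilde P$ supplied by the quoted theorem, picks any $\tilde\rho\in\tilde P$ with two labelled entries $a,b$, inflates $a$ and $b$ while deflating every other entry to the empty permutation, and asserts that this yields $n+1$ distinct permutations of length $n$. You are right to be suspicious here: as written this fails for $\tilde\rho=1^{+}2^{+}$, where every such inflation is $\id_n$. Your cleaning step is precisely what repairs this gap, since the statement only asks for \emph{some} representing $\tilde P$, not the particular one returned by the theorem.

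One refinement: the cleaning you describe---merging \emph{adjacent} same-sign labels with consecutive values---does not by itself guarantee that the surviving labelled pair yields distinct inflations when the other entries are set to empty. For instance $\tilde\rho=1^{+}\,3\,2^{+}$ survives your merge rule, yet deflating the unlabeled $3$ to empty again produces only $\id_n$. The remedy is either to reduce more aggressively (absorb unlabeled entries into neighbouring compatible labels as well, exactly the ``reduced peg permutations'' you cite from Homberger and Huczynska--Vatter), or to keep a separating unlabeled entry as a singleton in the inflation so that the two monotone blocks are genuinely distinguished. Either fix is routine once seen; you correctly locate this as the one technical point and point to the right references.
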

\proof
\begin{itemize}
    \item[] $[\textit 1.\implies \textit 2.]$  
    If for every $n\geq 1,$ $|\mathcal{C}_n|<K,$ where $K$ is a constant independent of $n,$ the growth rate of $|\mathcal{C}_n|$ is polynomial (the constant polynomial $K$). Hence, by the previous Theorem, $\mathcal{C}=\Grid(\tilde P)$ where $\tilde P$ is a finite set of peg permutations. Suppose that in $\tilde P$ there is a peg permutation $\tilde \rho$  with more than one element labeled. Let $a$ and $b$ be two labeled elements in $\tilde \rho.$ We can inflate $a$ and $b$ by increasing or decreasing sequences of arbitrary length. Hence $\Grid(\tilde P)\cap S_n$ would have at least $n+1$ elements of the form $t_1\,t_2\,\ldots\, t_k\,s_1\,s_2\,\ldots \,s_{n-k},$ with $0\leq k\leq n,$ where $t_1\,t_2\,\ldots\, t_k$ is the monotone sequence obtained by inflating $a,$ $s_1\,s_2\,\ldots \,s_{n-k}$ is the monotone sequence obtained by inflating $b,$ and all the other elements of $\rho$ have been inflated by the empty interval. This is a contradiction, because the cardinality of $\Grid(\tilde P)\cap S_n$ is bounded above by the constant $K.$
    
    \item[] $[\textit 2.\implies \textit 3.]$

    If $\mathcal{C}=\Grid(\tilde P)$ where $\tilde P$ is a finite set of peg permutations each of which has at most one labeled element, then $\mathcal{C}_n=\Grid(\tilde P)\cap S_n$ has a constant number of elements, for $n$ sufficiently large. In fact, let $\tilde \rho=a_1\ldots a_r b^+ c_1\ldots c_s $ an element of $\tilde P.$ If $n\geq r+s,$ such an element gives rise to exactly $2^{r+s}$ in $\mathcal C_n$ depending on how many of the $a_i'$s or $c_j'$s are inflated by the empty permutation.  The same holds when the element $b$ has label $-.$

    \item[] $[\textit 3.\implies \textit 1.]$
     Trivial.
\end{itemize}

\endproof

\begin{example}

Consider the class $\mathcal{C}=S(132, 312, 321, 2314).$ 

It is easily seen that 
$$S_n(132, 312, 321, 2314)=\{2\;3\;\ldots\;n\;1,\quad 2\;1\;3\;\ldots\;n,\quad \id_n\}$$ for every $n\geq 3.$

Hence $|\mathcal{C}_n|=3$ for $n\geq 3.$ The class $\mathcal{C}$ can be written as $\mathcal{C}=\Grid(\tilde P)$ where $\tilde P=\{213^+,\,2^+1\}.$

Notice also that the group generated by $S_n( 132, 312, 321, 2314)$ is $S_n$ by Lemma \ref{gen_sets}.

\end{example}

\begin{thm}\label{fixed_G_structure}
Let $T\subseteq S$ and let $G$ be a finite group. Suppose that there exists $n_0$ in $\mathbb N$ such that $\langle S_n(T)\rangle =G$ for every $n\geq n_0.$

Then $S(T)=\Grid(\tilde P),$ where every peg permutation in $\tilde P$ is either unlabeled, or of the form 
$$\tilde \rho=a_1\ldots a_r \; (r+1)^+\;b_1\dots b_s,$$
or 
$$\tilde \rho=b_1\dots b_s \; (r+1)^-\;a_1\ldots a_r,$$
where, for every $i$ and $j,$ $a_i<r+1$ and $b_j>r+1.$

In particular, for $n$ sufficiently large, every $\pi\in S_n(T)$ is either of the form 
$$\pi=123[\tau, \id,\sigma]$$
or of the form
$$\pi=321[\sigma,\psi,\tau]$$
where $\tau \leq \,a_1\ldots a_r,$ $\sigma \leq\,b_1\ldots b_s$ and $|\tau|+|\id|+|\sigma|=n.$
% $$\pi=a_1\ldots a_r \; r+1\;r+2\;\ldots \; n-s\;b_1\dots b_s$$ or of the form
% $$\pi=b_1\ldots b_s \;n-s \;n-s-1\;\ldots \; r+1\;a_1\dots a_r$$
% where $a_1\,\ldots \, a_r$ is a permutation of the symbols $\{1,\,\ldots\,, r\}$ and $b_1\,\ldots \, b_s$ is a permutation of the symbols $\{n-s+1,\,\ldots\, , n\},$ with  $0\leq r<n-s+1\leq n+1.$

\end{thm}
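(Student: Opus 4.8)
The plan is to first convert the group-theoretic hypothesis into a statement about growth, and then to pin down the single labeled entry of each peg permutation by a cycle-length argument.

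Since $S_n(T)\subseteq \langle S_n(T)\rangle\cong G$ and $G$ is finite, the cardinality $|S_n(T)|$ is bounded by $|G|$ for every $n\geq n_0$, so $S(T)$ is a class whose growth is bounded by a constant. Proposition \ref{bounded_classes_structure} then yields $S(T)=\Grid(\tilde P)$ with $\tilde P$ finite and each peg permutation carrying at most one labeled entry. An unlabeled peg permutation of length $\ell$ can only be inflated by single points or the empty permutation, so it contributes permutations of length at most $\ell$; these account for the ``unlabeled'' alternative and are irrelevant for large $n$. It therefore remains to constrain the shape of a peg permutation $\tilde\rho$ carrying one labeled entry, say at position $p$ with value $v$, and I would treat each such peg separately, using that every inflation of $\tilde\rho$ lies in $\Grid(\tilde\rho)\subseteq S(T)$.

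Consider first the $+$ case. The idea is to isolate the relevant features by inflating the labeled entry by an increasing run of length $m$ while inflating every other entry of $\tilde\rho$ by the empty permutation, which is a legal inflation; this reduces matters to two-block permutations. If some entry to the left of position $p$ had value larger than $v$, or some entry to the right had value smaller than $v$, then keeping that single offending entry (inflated by one point) and deleting all others would produce, at length $m+1$, either $(m+1)\,1\,2\,\cdots\,m$ or $2\,3\,\cdots\,(m+1)\,1$, each an $(m+1)$-cycle lying in $S_{m+1}(T)\subseteq\langle S_{m+1}(T)\rangle\cong G$. Since the orders of elements of a finite group are bounded, letting $m$ grow gives a contradiction. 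Hence every entry to the left of the labeled one is smaller and every entry to its right is larger, which forces the labeled entry to be the fixed point of value $r+1$ at position $r+1$ and yields the form $a_1\ldots a_r\,(r+1)^+\,b_1\ldots b_s$.

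The $-$ case is the main obstacle, because inflating the labeled entry by a decreasing run of length $m$ together with a single offending entry produces an involution rather than a long cycle, so a single generator no longer witnesses unbounded order. The remedy is to pass to a product of two generators: alongside the displaced involution $\alpha$ I would also keep $\beta=\psi_{m+1}$, obtained by inflating the labeled entry alone by a decreasing run of full length. A direct computation shows that $\beta\alpha$ is again an $(m+1)$-cycle, so $G$ would contain cycles of unbounded length, a contradiction. This excludes left-below and right-above offending entries for a $-$ label and produces the form $b_1\ldots b_s\,(r+1)^-\,a_1\ldots a_r$. Finally, once the labeled entry is known to be the separating fixed point, inflating it by a monotone run of length $m$ and the remaining entries by single points or the empty permutation produces exactly the direct sum $123[\tau,\id,\sigma]$ with $\tau\leq a_1\ldots a_r$ and $\sigma\leq b_1\ldots b_s$ in the $+$ case, and the permutation $321[\sigma,\psi,\tau]$ in the $-$ case; for $n$ large, every element of $S_n(T)$ arises in this way, since the bounded-length unlabeled pegs cannot contribute.
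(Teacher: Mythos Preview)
Your proof is correct and follows essentially the same approach as the paper's: bound $|S_n(T)|$ by $|G|$, invoke Proposition~\ref{bounded_classes_structure}, then in the $+$ case produce an $(m+1)$-cycle from a single offending entry, and in the $-$ case combine the displaced involution with $\psi_{m+1}$ to obtain an $(m+1)$-cycle. The only cosmetic difference is that you phrase the contradiction via the boundedness of element orders in $G$, whereas the paper phrases it via $|\langle S_n(T)\rangle|\geq n$.
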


\proof
Since $\langle S_n(T)\rangle =G$ for every $n\geq n_0,$ the cardinality of $S_n(T)$ is bounded above by a constant $|G|=K$ independent of $n.$
By the previous corollary, each element of the class $ S(T)$ can be obtained inflating a peg permutation with at most one labeled element.

Let $\tilde \rho$ be a peg permutation with exactly one labeled element.  
\begin{itemize}
    \item Suppose $$\tilde \rho=a_1\ldots a_r \; c^+\;b_1\dots b_s,$$ where $c$ is the only labeled element, and suppose that there exists $1\leq i\leq r$ with $a_i>c.$ Then, inflating such $a_i$ by the permutation $1,$ every other $a_j$ and $b_j$ by the empty permutation and $c$ by the increasing permutation ($\id$), we obtain the element $n\;1\;2\;\ldots\;n-1\in S_n(T)$ for every $n.$ But this is a cycle of order $n.$ Hence $\langle S_n(T)\rangle$  contains at least $n$ elements  and its cardinality is not constant.

    Similarly, if there exists $1\leq i\leq s$ with $b_i<c,$ the set $S_n(T)$ contains the cycle $(1,\;2,\;3,\;\ldots\; ,n)$ of order $n.$ 

    \item Suppose otherwise that $$\tilde \rho=b_1\dots b_s \; c^-\;a_1\ldots a_r,$$ where $c$ is the only labeled element, and suppose that there exists $1\leq i\leq s$ with $b_i<c.$ Then, inflating such $b_i$ by the empty permutation or by the permutation $1,$ every other $b_j$ and $a_j$ by the empty permutation and $c$ by a decreasing permutation ($\psi$), we obtain that the elements $\psi_n$ and $\alpha=1\;n\quad n-1\;\ldots\;2\in S_n(T)$ for every $n.$ But $\alpha  \psi_n=(1,\;2,\;\ldots \; ,n),$ a cycle of order $n,$ and hence $\langle S_n(T)\rangle$ contains at least $n$ elements  and its cardinality would not be constant. 

    Similarly, if there exists $1\leq i\leq r$ with $a_i>c,$ the set $S_n(T)$ contains the permutations $\psi_n$ and $\beta=n-1\quad n-2\;\ldots\; 1\;n$ whose composition $\psi_n \beta=(1,\;2,\;\ldots\;,n)$ is a cycle of order $n.$ 
    
\end{itemize}
This concludes the proof of the first assertion. 

Now we consider the second assertion. If $n$ is greater than the maximal length of every unlabeled permutation in $\tilde P,$ every element in $S_n(T)$ can be obtained by inflating a peg permutation of one of the two forms above. 
Consider $$\tilde \rho=a_1\ldots a_r \; (r+1)^+\;b_1\dots b_s.$$
Let $\tau$  be a pattern in $a_1\ldots a_r$ and $\sigma$ be a pattern in $b_1\ldots b_s.$
Now inflate every element of  $\tau$ and   $\sigma$ by the permutation $1,$  the element $r+1$ by an increasing permutation and every other element by the empty permutation. In this way, we get the permutation $\pi=123[\tau,\id,\sigma].$

The permutation $\pi=321[\sigma,\id,\tau]$ can be obtained similarly from a peg permutation of the form 
$$\tilde \rho=b_1\dots b_s \; (r+1)^-\;a_1\ldots a_r.$$
\endproof

\begin{example}
    Consider the class $S(123,132,231,3214).$ As we will see in Theorem \ref{threethreeonefour} , 
     $\langle S_n(123,132,231,3214)\rangle \cong  (S_3\times S_3) \rtimes \mathbb Z_2,$ for every $n\geq 6,$  a group of order $72.$

    The permutations in $S_n(123,132,231,3214),$ with $n\geq 3$ are
    $\psi,$ $$\alpha=n\quad n-1\quad n-2\ldots \;4\;2\;1\;3$$ and $$ \beta=n\quad n-1\quad n-2\ldots \;4\;3\;1\;2.$$

    Hence $S(T)=\Grid(\tilde P),$ where $\tilde P$ consists only of the peg permutation $4^-213.$
\end{example}

Now we are in position to answer Question \ref{question_fixed_G} for every abelian $G.$

\begin{thm}
Let $T\subseteq S$ and let $G$ be a finite, nontrivial abelian group. If there exists $n_0$ in $\mathbb N$ such that $\langle S_n(T)\rangle \cong G$ for every $n\geq n_0,$ then $G\cong \mathbb Z_2$ or $G\cong \mathbb Z_2 \times \mathbb Z_2.$
\end{thm}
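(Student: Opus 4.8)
The strategy is to use Theorem \ref{fixed_G_structure} to pin down the structure of $S_n(T)$ explicitly, and then to exploit the abelian hypothesis together with parity of permutations to force $|G|$ to be very small. First I would invoke Theorem \ref{fixed_G_structure}: for $n$ large, every $\pi \in S_n(T)$ has the form $\pi = 123[\tau,\id,\sigma]$ or $\pi = 321[\sigma,\psi,\tau]$, where $\tau$ and $\sigma$ range over (sub)patterns of fixed permutations $a_1\ldots a_r$ and $b_1\ldots b_s$. The key observation is that, as $n$ grows, these permutations are ``mostly identity'' (in the first case) or ``mostly decreasing'' (in the second case): the nontrivial part $\tau$, $\sigma$ lives in a bounded-size window while a growing block of fixed points (resp. a growing decreasing block $\psi$) fills the rest. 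So the generating set consists of permutations that either act as a bounded permutation on a fixed set of positions, or act as $\psi$ composed with such a bounded permutation.

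Next I would separate the even and odd generators. Since $G$ is abelian, $G$ is generated by commuting elements, and I would track the sign homomorphism $\mathrm{sgn}\colon G \to \{\pm 1\}$. The permutations of type $123[\tau,\id,\sigma]$ act nontrivially only on a bounded window of positions, so there are only finitely many of them regardless of $n$; these generate a fixed finite abelian subgroup $A$. The permutations of type $321[\sigma,\psi,\tau]$ all contain the long decreasing block $\psi_n$, whose sign alternates with $n$. The plan is to show that any two such ``type-$321$'' generators differ by a type-$123$ permutation, so that the whole group is generated by $A$ together with a single representative of the $\psi$-coset. The hard step is controlling how the sign of $\psi_n$ varies with $n$: since $\mathrm{sgn}(\psi_n) = (-1)^{\lfloor n/2\rfloor}$, if $G$ is to be isomorphic to the same group for every $n\geq n_0$, the abelian structure must be compatible with this oscillation, and I expect this to be the main obstacle.

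To close the argument I would combine these observations. Because $G$ is abelian and arises as $\langle S_n(T)\rangle$ uniformly in $n$, the elements of $S_n(T)$ commute as permutations for all large $n$. Two permutations acting on disjoint or nested blocks commute only under strong constraints; forcing a bounded-window permutation $123[\tau,\id,\sigma]$ to commute with a reversal-type permutation $321[\sigma',\psi,\tau']$ for all $n$ severely restricts the windows. I expect this to force $r$ and $s$ to be at most $1$, so that $\tau,\sigma \in \{\id_1,\varnothing\}$ and each generator is essentially $\id_n$, a single transposition of adjacent-in-value symbols, $\psi_n$, or $\psi_n$ times such a transposition. The abelian subgroup generated by $\psi_n$ and one commuting involution is either $\mathbb{Z}_2$ or $\mathbb{Z}_2\times\mathbb{Z}_2$, and a final check that no larger cyclic factor can appear (any element of order $>2$ built from these pieces would fail to be an involution while still being forced to commute with $\psi_n$, contradicting $\psi_n^2=\id$ and the oscillation of $\mathrm{sgn}(\psi_n)$) yields the claim. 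The main obstacle throughout is ruling out, by the commutativity and uniform-in-$n$ conditions together, any generator that would inject a cyclic factor $\mathbb{Z}_m$ with $m\geq 3$; I would handle this by showing such a generator necessarily creates an element whose order grows with $n$ or whose conjugates fail to commute, contradicting that $G$ is a fixed finite abelian group.
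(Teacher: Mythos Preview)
Your plan has the right starting point (Theorem~\ref{fixed_G_structure}) and correctly identifies commutativity of the generators in $S_n$ as the key lever, but there is a genuine gap and a misleading detour.

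The gap is that your argument depends on the presence of a type-$321$ generator (one containing the long block $\psi$). You write that the type-$123$ generators ``generate a fixed finite abelian subgroup $A$,'' but you give no mechanism to bound $|A|$; finiteness is automatic from Theorem~\ref{fixed_G_structure}, the hard part is showing $|A|\le 4$. If the peg set $\tilde P$ contains only permutations of the form $a_1\ldots a_r\,(r{+}1)^+\,b_1\ldots b_s$, there is no $\psi_n$ to commute against, and your plan offers nothing further. The paper handles this case (its Case~1) by a direct pattern analysis: if $a_1\ldots a_r$ contains any of $132,\,231,\,312,\,321$, then by inflating suitably one produces two explicit generators in $S_n(T)$, such as $(1,2)$ and $(2,3)$, that fail to commute. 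This forces $a_1\ldots a_r$ to be $\id_r$ or $2\,1\,3\ldots r$, and an analogous argument pins down $b_1\ldots b_s$, whence the only possible type-$123$ generators are $\id_n$, $(1,2)$, $(n{-}1,n)$, and $(1,2)(n{-}1,n)$. Your sketch never reaches this conclusion, and without it $A$ could a priori be any finite abelian group.

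The detour is the sign argument. The oscillation of $\mathrm{sgn}(\psi_n)$ constrains only the particular embedding of $G$ into $S_n$, not the isomorphism type of $G$, so it cannot by itself rule out any abelian $G$. The paper does not use parity at all; in its Case~2 it uses instead the concrete fact that commuting with $\psi_n$ is equivalent to $\alpha=\alpha^{rc}$, and then observes that any occurrence of the pattern $12$ in $b_1\ldots b_s$ (or in $a_1\ldots a_r$) inflates to a generator $\alpha$ with $\alpha\neq\alpha^{rc}$. This forces the type-$321$ peg permutation to be equivalent to $1^-$, so the only type-$321$ generator is $\psi_n$ itself. You gesture at this commutativity constraint but state the outcome ``$r,s\le 1$'' only as an expectation; the $\alpha=\alpha^{rc}$ reformulation is exactly what makes it go through, and your plan is missing it. A final compatibility check (if $\psi_n$ is present then $(1,2)$ and $(n{-}1,n)$ cannot be, since they do not commute with $\psi_n$) then leaves only $\{\id_n\}$, $\langle\psi_n\rangle$, $\langle(1,2)\rangle$, $\langle(1,2),(n{-}1,n)\rangle$ and the like, giving $G\cong\mathbb{Z}_2$ or $\mathbb{Z}_2\times\mathbb{Z}_2$.
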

\proof
Suppose that $S_n(T)$ generates a fixed abelian group $G$ for every $n\geq n_0.$

By Proposition \ref{bounded_classes_structure} and by Theorem \ref{fixed_G_structure}, $\cup_{n\geq 0} S_n(T)=\Grid(\tilde P),$ where $\tilde P$ is a finite set of peg permutations each of which is either unlabeled, or of the form
$$\tilde \rho=a_1\ldots a_r \; (r+1)^+\;b_1\dots b_s,$$
or 
$$\tilde \rho=b_1\dots b_s \; (r+1)^-\;a_1\ldots a_r,$$
where, for every $i$ and $j,$ $a_i<r+1$ and $b_j>r+1.$

\begin{itemize}
    \item[\textbf{Case 1}] Suppose that in $\Grid(\tilde P)$ there exists a peg permutation $$\tilde \rho=a_1\ldots a_r \; (r+1)^+\;b_1\dots b_s.$$

\begin{itemize}
\item If $a_1\,\ldots \, a_r$ contains a subsequence $a_x,a_y,a_z$ order isomorphic to $132,$ then, inflating $a_x,$ $a_y$ and $a_z$ by the permutation $1,$ every other $a_i$ and $b_j$ with the empty permutation and $c$ with an increasing permutation, we obtain in $S_n(T)$ the permutation $1\,3\,2\,4\,5\,\ldots\,n=(2,\;3).$

Inflating $a_y$ and $a_z$ by the permutation $1,$ every other $a_i$ and $b_j$ with the empty permutation and $c$ with an increasing permutation we obtain in $S_n(T)$ the permutation $2\,1\,3\,4\,\ldots\,n=(1,\;2).$ 
But this is impossible since $(1,\;2)$ and $(2,\;3)$ do not commute. 
\item Reasoning as above, if $a_1\,\ldots \, a_r$ contains the pattern $321$ we get the cycles $(1,\;2)$ and $(1,\;3)$ that do not commute.
\item  If $a_1\,\ldots \, a_r$ contains the pattern $231$ we get the cycles $(1,\;2)$ and $(1,\;2,\;3)$ that do not commute.
\item  If $a_1\,\ldots \, a_r$ contains the pattern $312$ we get the cycles $(1,\;2)$ and $(1,\;3,\;2)$ that do not commute.
\end{itemize}

As a consequence, $a_1\,\ldots \, a_r \in S_r(132,231,312,321)$ and hence either $$a_1\,\ldots \, a_r=\id_r,$$ or $$a_1\,\ldots \, a_r=2\;1\;3\;4\;\ldots\;r.$$

Now we turn our attention to the suffix $b_1\,\ldots\,b_s.$

\begin{itemize}
    \item If $b_1\,\ldots \, b_s$ contains the  subsequence $b_x,b_y,b_z$ order isomorphic to  $321,$ then, inflating $b_x,$ $b_y$ and $b_z$ by the permutation $1,$ every other $a_i$ and $b_j$ with the empty permutation and $c$ with an increasing permutation, we obtain in $S_n(T)$ the transposition $(n-2,\;n).$

    Inflating $b_y$ and $b_z$ by the permutation $1,$ every other $a_i$ and $b_j$ by the empty permutation and $c$ by an increasing permutation we obtain in $S_n(T)$ the transposition $(n-1,\;n).$
    But this is impossible since $(n-2,\;n)$ and $(n-1,\;n)$ do not commute. 

    \item Similarly, if $b_1\,\ldots \, b_s$ contains the pattern $231$ we get the cycles $(n-1,\;n)$ and $(n-2,\;n-1,\;n)$ that do not commute.

    \item If $b_1\,\ldots \, b_s$ contains the pattern $213$ we get  the cycles $(n-1,\;n)$ and $(n-2,\;n-1)$ that do not commute.

    \item If $b_1\,\ldots \, b_s$ contains the pattern $312$ we get  the cycles $(n-1,\;n)$ and $(n-2,\;n,\;n-1)$ that do not commute.
    
\end{itemize}

As a consequence the suffix $b_1\,\ldots \, b_s$ of $\tilde\rho$ is a permutation of the symbols $\{r+2,\ldots ,r+s+1\}$ that avoids $213,231,312,$ and $321$ and hence it is easily seen that $b_1\,\ldots \, b_s$ is either the identity or the transposition that exchanges the two last symbols. 

We deduce that the peg permutation $\tilde \rho$ is one of the following $$1^+,\; 213^+,\;1^+32,\;213^+54.$$

\item[\textbf{Case 2}] Suppose that in $\Grid(\tilde P)$ there exists a peg permutation $$\tilde \rho=b_1\dots b_s \; (r+1)^-\;a_1\ldots a_r.$$

Inflating every $a_i$ and every $b_j$ by the empty permutation and $r+1$ by a decreasing permutation, we get the permutation $\psi_n\in S_n(T).$  Notice that every other permutation $\alpha$ in $S_n(T)$ must commute with $\psi_n$ because we are supposing that $S_n(T)$ generates a commutative group. But $\alpha \psi_n=\psi_n \alpha$ is equivalent to $\alpha=\alpha^{rc}.$ Hence every element of $S_n(T)$ must be equal to its reverse-complement. 

Now consider once more the peg permutation $$\tilde \rho=b_1\dots b_s \; (r+1)^-\;a_1\ldots a_r.$$  If $b_1\dots b_s$ contains the pattern $12$ realized by the elements $b_x$ and $b_y$ (with $x<y$), inflating $b_x$ and $b_y$ by the permutation $1,$ every other $b_i$ and $a_j$ by the empty permutation and $r+1$ by a decreasing permutation, we get the permutation $\alpha=n-1\quad n\quad n-2\quad n-3\;\ldots 1\in S_n(T).$ This is impossible since $\alpha\neq \alpha^{rc}.$ As a consequence $b_1\dots b_s$ and, similarly, $a_1\dots a_r$ must be the decreasing permutations and hence the peg permutation $\tilde \rho$ is equivalent to the peg permutation $1^-.$
\end{itemize}

\textbf{Case 1} and \textbf{Case 2} prove that $S(T)=\Grid(\tilde P),$ where $\tilde P$ contains either unlabeled permutations or elements from the set of peg permutation $$\{1^-,\;1^+,\; 213^+,\;1^+32,\;213^+54\}.$$

It is easily checked that if $\tilde P$ contains $1^-$ then $\tilde P$ does not contain $213^+,$ $1^+32,$ $213^+54$, since $G$ is commutative. 
For every possible choice of $\tilde P$ we can only get either $G\cong \mathbb Z_2$ or $G \cong \mathbb Z_2\times \mathbb Z_2$ for $n$ large.

This concludes the proof. 
\endproof

Now we consider Question \ref{question_fixed_G} when $G$ is an alternating group.

\begin{thm}
Let $T\subseteq S$ and let $G=A_m,$ $m\geq 1,$ be an alternating group. If there exists $n_0$ in $\mathbb N$ such that $\langle S_n(T)\rangle \cong G$ for every $n\geq n_0,$ then $m\leq 2.$
\end{thm}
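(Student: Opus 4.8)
The plan is to run a parity argument driven by a single group-theoretic fact: for every $m$ the alternating group $A_m$ has no subgroup of index $2$. Indeed the abelianization of $A_m$ is $\mathbb Z_3$ for $m\in\{3,4\}$ and is trivial for $m\le 2$ and for $m\ge 5$, so $A_m$ admits no surjection onto $\mathbb Z_2$. I would therefore consider the sign homomorphism $\operatorname{sgn}\colon S_n\to\{\pm 1\}$ restricted to $G_n:=\langle S_n(T)\rangle$. Were this restriction nontrivial, its kernel would be a subgroup of index $2$ in $G_n\cong A_m$, which is impossible; hence $\operatorname{sgn}$ is trivial on $G_n$. In particular every permutation in $S_n(T)$ is even, and this holds for all $n\ge n_0$.

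The next step is to feed this evenness constraint into the structural description already available. By Proposition~\ref{bounded_classes_structure} and Theorem~\ref{fixed_G_structure}, $S(T)=\Grid(\tilde P)$, where each $\tilde\rho\in\tilde P$ is unlabeled, or $+$-labeled of the form $a_1\ldots a_r\,(r+1)^+\,b_1\ldots b_s$, or $-$-labeled of the form $b_1\ldots b_s\,(r+1)^-\,a_1\ldots a_r$, with $a_i<r+1<b_j$. Choosing $n$ larger than the length of every unlabeled peg permutation in $\tilde P$, every element of $S_n(T)$ is produced by a labeled peg permutation, so it suffices to rule out, on parity grounds, every nontrivial labeled shape.

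I would then test each shape against evenness. Inflating the single labeled entry of a $-$-labeled peg permutation by a decreasing interval of length $n$ and all remaining entries by the empty permutation yields $\psi_n\in S_n(T)$; since $\operatorname{sgn}(\psi_n)=(-1)^{\binom{n}{2}}=-1$ for $n\equiv 2,3\pmod 4$, choosing such an $n\ge n_0$ contradicts evenness, so $\tilde P$ has no $-$-labeled peg permutation. For a $+$-labeled peg permutation, if its prefix $a_1\ldots a_r$ were not increasing it would have a descent $a_i>a_{i+1}$; inflating these two entries by $1$, the labeled entry by an increasing interval, and everything else by the empty permutation gives $2\,1\,3\,4\cdots n=(1,2)\in S_n(T)$, an odd permutation. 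The symmetric argument on the suffix $b_1\ldots b_s$ produces the odd transposition $(n-1,\,n)$. Hence every prefix and suffix is increasing, so each $+$-labeled peg permutation equals $\id_r\,(r+1)^+\,\id_s$, whose only inflation of size $n$ is $\id_n$. Therefore $S_n(T)=\{\id_n\}$ for large $n$, the group $G_n$ is trivial, and $A_m\cong\{\id\}$ forces $m\le 2$.

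I expect the main obstacle to be bookkeeping rather than a conceptual difficulty: one must verify that the specific odd permutations $\psi_n$, $(1,2)$ and $(n-1,n)$ are genuinely realizable as grid-class inflations of the stated peg permutations (i.e.\ that the chosen pattern of empty, singleton and monotone inflations is admissible) and that the threshold $n_0$ can be taken large enough both to discard the unlabeled peg permutations and to supply an $n\equiv 2,3\pmod 4$. Once these membership facts are checked, the parity obstruction is immediate, and no finer analysis of $A_m$ (such as its direct indecomposability) is needed.
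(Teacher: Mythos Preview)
Your proof is correct and follows essentially the same route as the paper: invoke Proposition~\ref{bounded_classes_structure} and Theorem~\ref{fixed_G_structure}, eliminate $-$-labeled peg permutations via the parity of $\psi_n$, and eliminate nontrivial $+$-labeled ones by producing the odd transposition $(1,2)$ (resp.\ $(n-1,n)$) from a non-increasing prefix (resp.\ suffix). Your write-up is in fact slightly more careful than the paper's in two respects: you make explicit the reason why $G_n\cong A_m$ forces every element of $G_n\le S_n$ to be even (namely, $A_m$ has no subgroup of index $2$, so $\operatorname{sgn}|_{G_n}$ is trivial), whereas the paper leaves this implicit; and you give the correct parity condition $\operatorname{sgn}(\psi_n)=(-1)^{\binom{n}{2}}$, odd for $n\equiv 2,3\pmod 4$, while the paper's ``$\psi_n$ is odd if $n$ is odd'' is inaccurate (e.g.\ $\psi_5$ is even) though harmless for the argument.
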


\proof
Suppose that $S_n(T)$ generates a fixed alternating group $G=A_m$ for every $n\geq n_0.$

Proposition \ref{bounded_classes_structure} and  Theorem \ref{fixed_G_structure} imply that $\cup_{n\geq 0} S_n(T)=\Grid(\tilde P)$ where $\tilde P$ is a finite set of peg permutations each of which is either unlabeled, or of the form
$$\tilde \rho=a_1\ldots a_r \; (r+1)^+\;b_1\dots b_s,$$
or 
$$\tilde \rho=b_1\dots b_s \; (r+1)^-\;a_1\ldots a_r,$$
where, for every $i$ and $j,$ $a_i<r+1$ and $b_j>r+1.$

If one peg permutation of the second kind appears in $\tilde P,$ inflating each $a_i$ and $b_j$ by the empty permutation and $r+1$ by a decreasing permutation we get $\psi_n\in S_n(T)$ for every $n.$ But in this case it is impossible that $S_n(T)$ generates a fixed alternating group $A_m$ for sufficiently large $n,$ because $\psi_n$ is odd if $n$ is odd. Hence every labeled peg permutation in $\tilde P$ is of the first kind. 

Let $$\tilde \rho=a_1\ldots a_r \; (r+1)^+\;b_1\dots b_s$$ be one of the labeled peg permutation in $\tilde P.$

Suppose that $a_1\ldots a_r$ contains the pattern $21,$ realized by the elements $a_x$ and $a_y$ (with $x<y$). Then, inflating $a_x$ and $a_y$ by the permutation $1,$ every other $a_i$ and $b_j$ by the empty permutation and $r+1$ by the identity, we get the permutation $2\,1\,3\,\ldots\,n=(1,\;2)\in S_n(T).$ This is impossible, since this permutation is odd. 
Hence $a_1\ldots a_r=\id_r.$

We can show similarly that $b_1\ldots b_s$ must be the increasing permutation $\id_s.$

As a consequence $\tilde \rho$ is equivalent to $1^+,$  the set $S_n\cap\Grid(\tilde P)$ contains only $\id_n$ for sufficiently large $n$ and $\langle S_n(T)\rangle= \{\id_n\}=A_1=A_2.$
\endproof

\section{Sets $T$ such that $\langle S_n(T) \rangle=S_n.$}\label{section_gen_Sn}

From now on we determine the groups $\langle S_n(T) \rangle$ for particular sets of patterns $T.$

First of all we state a rather general result that allows us to determine sets $T\subseteq S$ such that $\langle S_n(T) \rangle=S_n.$

\begin{thm}\label{general_generatorsSn}
% Let $T\subseteq S_k$ where $k$ is a fixed positive integer. If 
% $$T\cap \{k\;2\;3\ldots k-1\;1,\quad k\;1\;2\ldots k-1,\quad 2\;3\ldots k\;1,\quad \id_k \}=\emptyset$$
% or
% $$T\cap \{\id_k,\quad(r-1\quad r)\mbox{ for }2\leq r\leq k\}=\emptyset$$
% or
% $$T\cap \{2\;3\ldots k\;1,\quad \id_k,\quad 2\;1\;3\ldots k \}=\emptyset$$
% or
% $$T\cap \{1\;\ldots k-2\;k\;k-1,\quad \id_k,\quad 2\;3\;\ldots k \;1\}=\emptyset$$
% than $\langle S_n(T)\rangle=S_n$ for every $n.$
Let $T\subseteq S$. If either
$$T\cap \{k\;2\;3\ldots k-1\quad 1,\quad k\;1\;2\ldots k-1,\quad 2\;3\ldots k\;1,\quad \id_k \;|\; k\geq 1\}=\emptyset,$$
or
$$T\cap \{\id_k,\quad(r-1,\quad r)\;|\; 2\leq r\leq k \mbox{ and }k\geq 1 \}=\emptyset,$$
or
$$T\cap \{2\;3\ldots k\;1,\quad \id_k,\quad 2\;1\;3\ldots k \;|\; k\geq 1\}=\emptyset,$$
or
$$T\cap \{1\;\ldots k-2\quad k\quad k-1,\quad \id_k,\quad 2\;3\;\ldots k \;1 \;|\; k\geq 1\}=\emptyset,$$
then $\langle S_n(T)\rangle=S_n$ for every $n.$

In particular,
\begin{itemize}
\item[i.]
if $T\subseteq S_3$ and $\langle S_n(T)\rangle\neq S_n,$ then $T$ shares at least an element with each one of the following sets $$\{123, 231, 312, 321\},\; \{123,132,213\},\;\{123,213,231\},\mbox{  and  }$$ $$\{123,132,231\}.$$ 
\item[ii.] if $T\subseteq S_4$ and $\langle S_n(T)\rangle \neq S_n,$ then $T$ shares at least an element with each one of the following sets $$\{1234, 2341, 4123, 4231\},\;\{1234,1243,1324,2134\},$$ $$\{1234,2134, 2341\}\mbox{  and  }\{1234,1243,2341\}.$$
\end{itemize}
\end{thm}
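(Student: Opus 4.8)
The plan is to prove the four implications by a single uniform device. For each of the four hypotheses I will exhibit an explicit set of generators of $S_n$, taken from Lemma \ref{gen_sets}, with the crucial property that \emph{every} pattern occurring in any of these generators already belongs to the set that $T$ is assumed to avoid. Once this is checked, the disjointness hypothesis forces each generator to contain no element of $T$, hence to lie in $S_n(T)$; since the generators generate $S_n$, we conclude $\langle S_n(T)\rangle=S_n$. Thus the whole theorem reduces to choosing the right generators and doing careful pattern bookkeeping.

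Concretely I would use: for the first hypothesis the pair $\{(1,2,\ldots,n),\,(1,n)\}$; for the second the full set of adjacent transpositions $\{(j,j+1):1\le j\le n-1\}$; for the third the pair $\{(1,2,\ldots,n),\,(1,2)\}$; and for the fourth the pair $\{(1,2,\ldots,n),\,(n-1,n)\}$. That each set generates $S_n$ is exactly Lemma \ref{gen_sets}: the second set is the first bullet, while the first, third and fourth sets are instances of the third bullet with transposition displacements $n-1$, $1$ and $1$, all coprime to $n$.

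The core of the work, and the step I expect to be the main obstacle, is identifying precisely the patterns carried by each generator. The cycle $(1,2,\ldots,n)=2\,3\ldots n\,1$ contains only the patterns $\id_k$ and $2\,3\ldots k\,1$, according as the chosen positions miss or include the terminal entry. The transposition $(1,n)=n\,2\,3\ldots (n-1)\,1$ is the delicate case: a four-way split on whether the selected positions contain the leading maximum, the trailing minimum, both, or neither yields exactly $k\,2\,3\ldots(k-1)\,1$, $k\,1\,2\ldots(k-1)$, $2\,3\ldots k\,1$ and $\id_k$, which is precisely the family in the first hypothesis. The remaining generators succumb to the same subsequence analysis: each adjacent transposition $(j,j+1)$ contains only $\id_k$ and patterns $(r-1,r)$, while $(1,2)$ and $(n-1,n)$ add only $2\,1\,3\ldots k$ and $1\ldots(k-2)\,k\,(k-1)$ respectively. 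Matching these lists against the four displayed sets completes the implications. This is elementary but requires attention, since one must argue that no \emph{other} pattern can occur.

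Finally, parts i and ii follow immediately from the contrapositive of the main statement. If $\langle S_n(T)\rangle\neq S_n$ then all four hypotheses fail, so $T$ meets each of the four families above. When $T\subseteq S_3$ (respectively $T\subseteq S_4$) any element witnessing such a meeting must itself have length $3$ (respectively $4$), so it suffices to intersect each family with $S_3$ (respectively $S_4$); specializing $k=3$ (respectively $k=4$) in the four families produces exactly the four sets listed in i (respectively ii).
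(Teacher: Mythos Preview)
Your proposal is correct and follows essentially the same approach as the paper: for each hypothesis you exhibit a generating set of $S_n$ from Lemma~\ref{gen_sets} (the cycle together with $(1,n)$, the adjacent transpositions, the cycle with $(1,2)$, the cycle with $(n-1,n)$) and verify that every pattern occurring in these generators lies in the corresponding excluded family, so the generators survive in $S_n(T)$. Your derivation of parts~i and~ii by specializing to $k=3$ and $k=4$ is likewise exactly what the paper does, only stated more explicitly.
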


\proof
If $n<k$ the theorem is trivially true. 
Assume that $n>k.$
Note that
\begin{itemize}
\item the patterns of length $k$ contained in at least one elementary transposition in $S_n$ are  $\id_k$ and $\quad(r-1,\, r)\mbox{ for }2\leq r\leq k,$
\item the patterns of length $k$ contained either in the cycle $(1,\;n)\in S_n$ or in $(1,\;2,\;\ldots \;,n)\in S_n$ are 
$k\;2\;3\ldots k-1\quad 1,\quad k\;1\;2\ldots k-1,\quad 2\;3\ldots k\;1,\quad \id_k,$
\item the patterns of length $k$ contained either in the cycle  $(1,\;2)\in S_n$ or in $(1,\;2,\;\ldots \;,n)\in S_n$ are $2\;3\ldots k\;1,\quad \id_k,\quad 2\;1\;3\ldots k,$ and 
\item the patterns of length $k$ contained either in the cycle   $(n-1,\;n)\in S_n$ or in $(1,\;2,\;\ldots \;,n)\in S_n$ are $2\;3\ldots k\;1,\quad \id_k,\quad 1\;2\ldots k-2\quad k\quad k-1.$
\end{itemize}

The assertion follows by Lemma \ref{gen_sets}.

If $k=n$ the proof is similar. 
\endproof

\section{Groups generated by $S_n(T)$ with $T\subseteq S$ and $|T|\leq 3.$}\label{section_T_three}

In this section we consider the group generated by the set $S_n(T)$, where $T\subseteq S$ and $|T|\leq 3.$ 

We begin with the analysis of $\langle S_n(T)\rangle$ when $T$ consists of  three patterns of length 3.

\begin{thm}
\label{threeofthree}
\begin{enumerate}
    \item  $\langle S_n(123,321,\tau) \rangle = \{\id_n\},$ for every $n>4$ and for every $\tau\in S_3.$
  %  \item $\langle S_n(123,132,213) \rangle = S_n,$
    \item $\langle S_n(132,213,321) \rangle \cong \mathbb{Z}_n,$ 
    % \item $\langle S_n(132,312,321) \rangle = \langle S_n(132,231, 321) \rangle = \langle S_n(213,231, 321) \rangle =$ 
   % $\langle S_n(213, 312, 321) \rangle = S_n,$
    % \item $\langle S_n(123, 213, 231)\rangle =\langle S_n(123, 213, 312) \rangle  =  \langle S_n(123, 132, 312) \rangle =$  
    
    % $\langle S_n(123, 132, 231) \rangle = S_n,$
    % \item $\langle S_n(213,231,312) \rangle = \langle S_n(132,231,312)  \rangle = S_n,$
    % \item $\langle S_n(132,213,231) \rangle = \langle S_n(132,213,312)  \rangle = S_n,$
    %     \item $\langle S_n(231,312,321) \rangle = S_n,$
   \item $\langle S_n(123,231,312) \rangle \cong D_n,$ for every $n\geq 3$.
    \item $\langle S_n(T)\rangle =S_n$ for every other set $T$ of three patterns of length three.
\end{enumerate}
\end{thm}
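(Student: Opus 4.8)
\emph{Parts (1)--(3): the exceptional sets.} For each of the six sets singled out in (1)--(3) the plan is to describe $S_n(T)$ explicitly and then read off the group it generates. For (1), a permutation of length $n>4$ cannot avoid both $123$ and $321$, since by the Erd\H{o}s--Szekeres theorem its longest increasing and its longest decreasing subsequence would both have length at most two, forcing $n\le 4$; hence $S_n(123,321,\tau)=\emptyset$ and $\langle S_n(123,321,\tau)\rangle=\{\id_n\}$ for every $n>4$ and every $\tau$. For (2) I would check that $S_n(132,213,321)$ consists exactly of the $n$ cyclic rotations of $\id_n$, i.e.\ of the powers of $(1,2,\ldots,n)$, so that the set is already the cyclic group $\mathbb{Z}_n$. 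For (3) I would use the description $S_n(123,231,312)=\{k\,(k-1)\cdots 1\; n\,(n-1)\cdots(k+1)\mid 1\le k\le n\}$ recorded in the Example, observe that these $n$ permutations are involutions (reflections), and show that their products fill out the rotation subgroup, yielding $\langle S_n(123,231,312)\rangle\cong D_n$.

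\emph{Part (4): reduction by symmetry.} After (1)--(3) there remain fourteen of the $\binom{6}{3}=20$ sets. Since Lemma \ref{rc_inv} gives $\langle S_n(T)\rangle=\langle S_n(T^{-1})\rangle\cong\langle S_n(T^{rc})\rangle$, the group depends only on the orbit of $T$ under the group generated by the inverse and reverse-complement operations acting on three-element subsets of $S_3$; I would first compute these orbits so as to treat only one representative per orbit.

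\emph{Part (4): execution.} Whenever a representative is disjoint from at least one of the four sets $\{123,231,312,321\}$, $\{123,132,213\}$, $\{123,213,231\}$, $\{123,132,231\}$ of Theorem \ref{general_generatorsSn}(i), that theorem gives $\langle S_n(T)\rangle=S_n$ at once; this covers for instance $\{231,312,321\}$, $\{132,312,321\}$ and $\{213,312,321\}$, while their orbit-mates $\{132,231,321\}$ and $\{213,231,321\}$ follow from the inverse identity in Lemma \ref{rc_inv}. The orbits that meet all four sets and continue to do so under every symmetry---those of $\{123,132,213\}$, $\{123,132,231\}$, $\{132,213,231\}$ and $\{132,231,312\}$---must be handled by hand: for each representative I would determine $S_n(T)$ explicitly, pass to cycle notation, and exhibit a generating set of $S_n$ via Lemma \ref{gen_sets}, typically an $n$-cycle together with a transposition whose endpoints sit at a distance coprime to $n$. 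Since none of these four sets contains any $\psi_k$, we have $\psi_n\in S_n(T)$, so Lemma \ref{rcifnotpsi} lets me enlarge the generating set to $S_n(T)\cup S_n(T^r)\cup S_n(T^c)\cup S_n(T^{rc})$, which makes convenient short-support generators easier to locate.

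\emph{Main obstacle.} The crux lies in the explicit description of $S_n(T)$ for the two remaining representatives that contain $123$, namely $\{123,132,213\}$ and $\{123,132,231\}$: avoidance of $123$ forces every increasing subsequence to have length at most two, so these classes are sparse and rigidly structured, and one must pin the family down for \emph{all} $n$ and then verify that the cycles it contains genuinely generate $S_n$---in particular checking the coprimality hypothesis in the third item of Lemma \ref{gen_sets}. I expect the smallest cases (such as $n=3,4$) to require separate inspection, since there the classes are too small for the generic generating argument, which is consistent with the explicit size thresholds appearing in parts (1)--(3).
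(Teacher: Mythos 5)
Your proposal is correct, and its skeleton is the same as the paper's: emptiness via Erd\H{o}s--Szekeres for (1), explicit descriptions of the two exceptional classes for (2) and (3), and for (4) a reduction modulo the symmetries of Lemma \ref{rc_inv}, direct use of Theorem \ref{general_generatorsSn}.\textit{i} on any orbit containing a set disjoint from one of its four sets, and Lemma \ref{rcifnotpsi} plus Lemma \ref{gen_sets} for what survives; your list of the four surviving orbits (those of $\{123,132,213\}$, $\{123,132,231\}$, $\{132,213,231\}$, $\{132,231,312\}$) is exactly the residue the paper must handle. The one genuine divergence is your ``main obstacle'': you plan to describe $S_n(123,132,213)$ and $S_n(123,132,231)$ explicitly and hunt for cycles inside them. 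That step is both heavier than you suggest --- these classes are not boundedly small, e.g.\ $S_n(123,132,213)$ consists of the skew sums of blocks $1$ and $12$ and so has Fibonacci-many elements --- and entirely avoidable. The enlargement you already cite does all the work: since $321\notin T$, Lemma \ref{rcifnotpsi} puts the whole class $S_n(T^r)$ (or $S_n(T^c)$) into the generating set, and $\{123,132,213\}^r=\{231,312,321\}$ and $\{123,132,231\}^c=\{213,312,321\}$ are each disjoint from one of the four sets of Theorem \ref{general_generatorsSn}.\textit{i}; concretely, the first of these classes contains every adjacent transposition and the second contains $(n-1,\,n)$ and $(1,\,2,\,\ldots\,,n)$, so $S_n$ is generated with no structural analysis of the $123$-avoiding classes at all. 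This is precisely how the paper disposes of those two orbits. Only the orbits of $\{132,213,231\}$ and $\{132,231,312\}$ require exhibiting elements by hand, and there your plan coincides with the paper's: after enlarging by Lemma \ref{rcifnotpsi}, the generating set contains $(1,\,2)\in S_n(132,231,312)$ and $(1,\,2,\,\ldots\,,n)\in S_n(132,213,312)$, which generate $S_n$ by Lemma \ref{gen_sets}. (Your worry about separate treatment of small $n$ is also unnecessary: Theorem \ref{general_generatorsSn} and Lemma \ref{gen_sets} are stated for every $n$, and for $n\le 2$ the claim is trivial.)
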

\begin{proof}
    
\begin{enumerate}
    \item  The set of generators is trivially empty for $n>4.$
    % \item The set of generators contains $\psi$ and $(i\ i+1) \psi$ for every $i$. Then, the generated group contains all elementary transpositions.
    \item The set of generators (which turns out to be itself a group by Lemma \ref{group_patterns}) consists of the cycle $(1,\,2,\,\ldots\,,n)$ and its powers.
%     \item The set of generators contains the cycle $(1\,2\,\ldots\,n)$ and the transposition $(1\, 2)$.
%     \item The set $S_n(123, 213, 231)$ contains $\psi$ and the permutations:
%     $$\alpha=1\,\,n\,\,n-1\,\,\ldots\,\,3\,\,2,$$
%     $$\beta=n\,\,1\,\,n-1\,\,n-2\,\,\ldots\,\,3\,\,2.$$ Now, straightforward computations show that $\alpha\beta=(1\, 2)$ and 
%  $\alpha\psi=(1\,2\,\ldots\,n)$. 
%  \item The set $S_n(213,231,312)$ contains only $n$ elements whose on line notation is $$\sigma \,=\, 1\;2\; \ldots\, k\; n \; n-1\;\ldots\, k+2\; k+1,\quad 1\leq k\leq n.$$
%  Hence,  Then, $S_n(213,231,312)$ contains the transposition $(n-1\;n)$, $\psi$ and $$\alpha\,=\,1\,\,n\,\,n-1\,\,\ldots\,\,3\,\,2.$$
% We deduce that $\langle S_n(213,231,312)\rangle = S_n.$ 
% \item The set $S_n(132,213,231)$ contains only $n$ elements whose on line notation is $$n \; n-1\; k+2\; k+1\; 1\; 2 \; \ldots \; k,\quad 1\leq k\leq n.$$
%  Hence,  Then, $S_n(132,213,231)$ contains $\psi$ and $$\gamma\,=\,n\,\,n-1\,\ldots\,3\, 1\,2.$$
%  Now, straightforward computations show that $\psi\gamma=(n-1\; n)$.
% Since $S_n(132,213,231)$ contains $(n-1\; n)$ and $\alpha$ defined above, we deduce that $\langle S_n(132,213,231)\rangle = S_n.$
% \item The set $S_n(231,312,321)$ contains all the elementary transpositions. 
\item The set $S_n(123,231,312)$ contains only $n$ elements whose one-line notation is:
$$k\quad k-1\;\ldots\;2\;1\;n\quad n-1\;\ldots\;k+1,\quad 1\leq k\leq n.$$
This means that $\psi$ and 
$$\alpha\,=\,1\,\,n\quad n-1\,\,\ldots\,\,3\,\,2$$
belong to this set. Observe that 
$$\alpha^2=\psi^2=(\alpha\psi)^n=\id,$$
hence $$\langle \alpha, \psi\rangle \cong D_n.$$
It is easily seen that the other elements in $S_n(123,231,312)$ are of the form $(\alpha \psi)^j \alpha$ for some $j$. 

\item By Lemma \ref{rcifnotpsi} we have 
$$\langle S_n(123,132,213)\rangle=\langle S_n(123,132,213) \cup S_n(231,312,321)\rangle,$$
and $\langle S_n(231,312,321)\rangle=S_n$ by Theorem \ref{general_generatorsSn}[\textit{i.}].

By Lemma \ref{rc_inv} we have 
$$\langle S_n(123, 213, 231)\rangle \cong\langle S_n(123, 213, 312) \rangle  \cong  \langle S_n(123, 132, 312) \rangle \cong$$
$$\langle S_n(123, 132, 231) \rangle,$$ and 
$$\langle S_n(132,312,321) \rangle \cong \langle S_n(132,231, 321) \rangle \cong \langle S_n(213,231, 321) \rangle \cong$$    
    $$\langle S_n(213, 312, 321) \rangle,$$ 
 by Lemma \ref{rcifnotpsi} we have
$$\langle S_n(123, 213, 231)\rangle=$$ $$\langle S_n(123, 213, 231)\cup S_n(132, 312,321)\cup  S_n(213, 231, 321)\cup S_n(123,132,312)\rangle,$$
and $\langle  S_n(132, 312, 321)\rangle=S_n$ by Theorem \ref{general_generatorsSn}[\textit{i.}].

By Lemma \ref{rc_inv} we have  
 $$\langle S_n(213,231,312) \rangle \cong \langle S_n(132,231,312)  \rangle,$$
and
$$\langle S_n(132,213,231) \rangle \cong \langle S_n(132,213,312)  \rangle,$$
by Lemma \ref{rcifnotpsi} we have 
$$\langle S_n(213,231,312)\rangle =\langle S_n(132,213,231) \rangle= $$
$$\langle S_n(213,231,312) \cup S_n(312,132,213)\cup$$ $$ S_n(132,213,231)\cup S_n(132,231,312)\rangle.$$
This last generating set contains the transposition $(1,\;2)$ (an element of $S_n(132,231,312)$) and the cycle $(1,\;2,\;\ldots \;,n)$ (an element of  $S_n(132,213,312)$). Hence the generated group is $S_n$ by Lemma \ref{gen_sets}.
 \end{enumerate}
\end{proof}

As a consequence we can characterize all the groups $\langle S_n(T) \rangle$ where $T$ contains no more than three arbitrary patterns. 

\begin{cor}\label{lessthanthreepatterns}
Let $T\subseteq S$ be a set of patterns, with $|T|\leq 3.$
\begin{enumerate}
    \item If $ \{ \id_r,\psi_s\}\subseteq T,$ for some $r,s \geq 1,$ then $\langle S_n(T)\rangle = \{ \id_n\}$ for $n>(r-1)(s-1).$ 
    \item If for every $r,s\geq 1$ $\{ \id_r,\psi_s\}\nsubseteq T,$ $T\neq \{132,213,321\}$ and $T\neq \{123,231,312 \},$ then $\langle S_n(T)\rangle = S_n.$ 

\end{enumerate}
\end{cor}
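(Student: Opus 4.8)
The plan is to treat the two parts separately, deriving the first from the Erd\H{o}s--Szekeres theorem and the second by reducing every admissible $T$ either to Theorem \ref{general_generatorsSn} or to Theorem \ref{threeofthree}. For Part 1, I would argue as follows. Any $\pi\in S_n(\id_r,\psi_s)$ has neither an increasing subsequence of length $r$ nor a decreasing subsequence of length $s$; by the Erd\H{o}s--Szekeres theorem this forces $n\leq (r-1)(s-1)$. Hence as soon as $n>(r-1)(s-1)$ we have $S_n(T)\subseteq S_n(\id_r,\psi_s)=\emptyset$, and since the subgroup generated by the empty set is trivial, $\langle S_n(T)\rangle=\{\id_n\}$. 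This is the whole of Part 1.

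For Part 2 the natural strategy is a case analysis on the number and the lengths of the patterns in $T$, using the contrapositive of Theorem \ref{general_generatorsSn}: if $\langle S_n(T)\rangle\neq S_n$, then $T$ must meet each of the four sets appearing there. I would first dispose of the case in which $T$ consists of three patterns of length three, which is exactly Theorem \ref{threeofthree}. Its only non-$S_n$ outcomes are $\{123,321,\tau\}$, which contains $\{\id_3,\psi_3\}$ and is therefore governed by Part 1, together with the two triples $\{132,213,321\}$ and $\{123,231,312\}$ that are explicitly excluded from the statement; every other triple of length-three patterns yields $S_n$.

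In each remaining situation --- $|T|\leq 2$, or $T$ containing a pattern whose length is different from three, or a mixture of lengths --- the goal is to exhibit one of the four sets of Theorem \ref{general_generatorsSn} that $T$ avoids entirely, whence $\langle S_n(T)\rangle=S_n$ follows at once. Here the reverse, complement, and reverse--complement symmetries encoded in Lemmas \ref{rc_inv} and \ref{rcifnotpsi} can be used to collapse many of these configurations into a handful of representatives, so that only finitely many genuinely distinct shapes of $T$ need to be inspected.

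The main obstacle, and the bulk of the work, is the finite but delicate bookkeeping required to check that the hypothesis $\{\id_r,\psi_s\}\nsubseteq T$ rules out precisely the configurations in which $T$ would meet all four obstruction sets while still generating a proper subgroup. Particular care is needed for the degenerate low-length and singleton cases, where the class $S_n(T)$ can collapse to a very small set; because each of the four sets in Theorem \ref{general_generatorsSn} contains every $\id_k$, these boundary cases cannot be settled through that theorem and must be examined by hand against the classification. Once they are reconciled with the hypotheses, Part 2 follows by assembling the pieces.
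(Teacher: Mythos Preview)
Part 1 and the subcase of Part 2 in which $T$ consists of three patterns of length three match the paper exactly. The divergence --- and the gap --- is in how you propose to handle the remaining admissible $T$ in Part 2.

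The paper does not argue via Theorem \ref{general_generatorsSn} there. Its engine is Lemma \ref{sottopatterns}, together with Lemma \ref{inclusion}: for each $\tau_i\in T$ one picks a length-$3$ pattern $\sigma_i\leq\tau_i$, so that $\langle S_n(\sigma_1,\sigma_2,\sigma_3)\rangle\subseteq\langle S_n(T)\rangle$, and then Theorem \ref{threeofthree} is applied to the resulting length-$3$ triple. This reduces everything to two residual one-parameter families --- those $T$ whose only length-$3$ shadows land on $\{132,213,321\}$ or on $\{123,231,312\}$ --- which the paper then dispatches by checking that $S_n(132,213,4321)$ already contains $(1,n)$ and $(1,2,\ldots,n)$, and handling the second family by symmetry via Lemma \ref{rcifnotpsi} and Lemma \ref{sottopatterns} again.

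Your plan omits this reduction step, and that is a genuine gap rather than a stylistic difference. You propose to show directly that every admissible $T$ (or some symmetric image of it) misses one of the four obstruction families in Theorem \ref{general_generatorsSn}, calling this ``finite but delicate bookkeeping.'' But nothing in your outline makes it finite: the patterns in $T$ may have arbitrary length, so there are infinitely many shapes of $T$ to consider, and the symmetries of Lemmas \ref{rc_inv} and \ref{rcifnotpsi} do not collapse them to finitely many equivalence classes. Worse, in the situation you yourself flag --- some $\id_k\in T$ --- all four obstruction families are met automatically by $\id_k$, Theorem \ref{general_generatorsSn} yields nothing, and ``examine by hand against the classification'' is not a method. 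The missing ingredient is precisely Lemma \ref{sottopatterns}: it is what converts the infinite problem into the finite one (triples in $S_3$) that Theorem \ref{threeofthree} settles, leaving only the two explicit families above to be checked.
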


\proof
The  first assertion is trivial since the set $S_n(T)$ is empty for $n$ sufficiently large  when $\{ \id_r,\psi_s\}\subseteq T.$
More precisely, every permutation of length $n>(r-1)(s-1)$ must contain a monotonically increasing subsequence of length $r$ or a monotonically decreasing subsequence of length $s,$ by the Erd\H os-Szekeres Theorem \cite{Erd_Sz}.

To prove the second assertion, recall that the only sets $T$ of three patterns of length three  such that $\langle S_n(T)\rangle \neq S_n$ are $\{132,213,321\}$ and $\{123,231,312\},$  by Theorem \ref{threeofthree}.  Hence, the second assertion follows from Theorem \ref{threeofthree} and from Lemmas \ref{inclusion} and \ref{sottopatterns} in all cases, except for the following ones.
\begin{itemize}
    \item \textbf{Case 1} $T$ consists of three (distinct) patterns $\tau_1,\tau_2$ and $\tau_3$ with $\tau_1\geq 132,$ $\tau_2\geq 213$ and $\tau_3\geq 321,$ with at least one of these patterns of length strictly greater than 3.
    \item \textbf{Case 2} $T$ consists of three (distinct) patterns $\tau_1,\tau_2$ and $\tau_3$ with $\tau_1\geq 123,$ $\tau_2\geq 231$ and $\tau_3\geq 312,$ with at least one of these patterns of length strictly greater than 3.
\end{itemize}

If Case 1 occurs, we can assume that $\tau_3\neq 321,$ otherwise Lemma \ref{sottopatterns} and Theorem \ref{threeofthree} would imply that $\langle S_n(T)\rangle =S_n.$

Notice that $\langle S_n(132,213,4321)\rangle =S_n$ because the set of generators contains the cycles $(1,\; 2,\, \ldots\,,n )$ and $(1,\, n),$ which generate $S_n$ by Lemma \ref{gen_sets}.

Now the fact that $S_n(132,213,\tau_3)$ with $\tau_3> 4321$  generates $S_n$ follows by Lemma \ref{sottopatterns}.

If Case 2 occurs, we can assume that $\tau_1\neq 123,$ otherwise Lemma \ref{sottopatterns} and Theorem \ref{threeofthree}  would imply that $\langle S_n(T)\rangle=S_n.$

Notice that $$\langle S_n(231,312,1234)\rangle=$$ 
$$\langle S_n(231,312,1234)\cup S_n(132,213,4321) \cup S_n(231,312,1234)\rangle $$ by Lemma \ref{rcifnotpsi}. This last group is $S_n$ because $S_n(132,213,4321)$ contains the permutations $(1,\;n)$ and $(1,\;2,\;\ldots \;,n),$ which generate $S_n$ by Lemma \ref{gen_sets}.

Now the fact that $S_n(\tau_1,231,312)$ with $\tau_1> 1234$ generates $S_n$ follows by Lemma \ref{sottopatterns}.

\endproof

\section{Groups generated by $S_n(T)$ with $T\subseteq S_3 \cup S_4$ and $|T|\geq 4.$}\label{section_T_four}

In this section we begin the study of the groups $\langle S_n(T)\rangle$ where $T$ has cardinality greater than three. We select some of the most meaningful cases. 

We consider the group generated by $S_n(T)$ with $T\subseteq S_3.$ The only cases not already covered by Theorem \ref{threeofthree} and  Corollary \ref{lessthanthreepatterns} are those with $|T|\geq 4.$
The following theorem deals with all these cases.

\begin{thm}
\begin{enumerate}
\label{fourofthree}
    \item $\langle S_n(\{123,321\}\cup T') \rangle = \{\id\},$ for every $n>4,$ where $T'$ is an arbitrary subset of $S.$
    \item $\langle S_n(213,231,312,321) \rangle \cong \langle S_n(132,231,312,321) \rangle \cong \langle S_n(132,213,231,312) \rangle\cong \mathbb{Z}_2,$
    \item $\langle S_n(123,132,213,231) \rangle \cong \langle S_n(123,132,213,312) \rangle \cong D_4,$
    \item $\langle S_n(132,213,312,321) \rangle \cong \langle S_n(132,213,231,321) \rangle \cong \mathbb{Z}_n,$
    \item $\langle S_n(123,213,231,312) \rangle \cong \langle S_n(123,132,231,312) \rangle \cong D_n,$
    \item $\langle S_n(123,132,213,231,312) \rangle \cong \mathbb Z_2,$
    \item $\langle S_n(132,213,231,312,321) \rangle \cong \mathbb \{\id_n\}.$
\end{enumerate}
\end{thm}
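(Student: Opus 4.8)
The plan is to treat all seven items by the two-stage method already used for Theorem~\ref{threeofthree}: first pin down the avoider set $S_n(T)$ explicitly, then recognize the group it generates, using Lemma~\ref{rc_inv} to collapse the paired isomorphism claims (via reverse-complement or inverse) and using Lemma~\ref{inclusion} together with Theorem~\ref{threeofthree} to bound the groups from above whenever $T$ refines a three-pattern class. In each case the key observation is which $3$-term patterns remain \emph{permitted}, since $T$ consists of length-$3$ (and occasionally longer) patterns and forbidding $T$ just selects the admissible triples.

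First dispose of the degenerate items. For \textbf{Part 1}, the Erd\H os--Szekeres theorem \cite{Erd_Sz} gives $S_n(123,321)=\emptyset$ once $n>4$, and enlarging the forbidden set by $T'$ keeps it empty, so the generated group is $\{\id\}$. For \textbf{Parts 6 and 7} exactly one triple survives: forbidding $\{123,132,213,231,312\}$ leaves only the decreasing triple $321$, and forbidding $\{132,213,231,312,321\}$ leaves only $123$. A short argument shows that a permutation all of whose $3$-term subsequences are decreasing (resp.\ increasing) is itself decreasing (resp.\ increasing); hence $S_n=\{\psi\}$ (resp.\ $\{\id\}$) for $n\ge 3$, giving $\langle\psi\rangle\cong\mathbb Z_2$ (resp.\ the trivial group). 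For the three sets in \textbf{Part 2}, two triples are permitted. For $\{213,231,312,321\}$ the permitted triples $123,132$ both carry their minimum on the left, which forces $\pi_1=1$ and, by induction, $\pi=\id$ except possibly for the order of the final two entries; thus $S_n=\{\id,(n-1,\,n)\}$ and the group is $\mathbb Z_2$. Its reverse-complement is $\{132,231,312,321\}$, so Lemma~\ref{rc_inv} supplies that isomorphism. The remaining set $\{132,213,231,312\}$ permits the two monotone triples $123,321$, and again a permutation with all triples monotone is monotone, so $S_n=\{\id,\psi\}$ and the group is $\langle\psi\rangle\cong\mathbb Z_2$.

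For the items refining a three-pattern class I would pass to the larger class and exhibit surviving generators. In \textbf{Part 4}, $\{132,213,321\}\subseteq\{132,213,312,321\}$, so Lemma~\ref{inclusion} and part~(2) of Theorem~\ref{threeofthree} make the group a subgroup of $\mathbb Z_n$; one checks routinely that the $n$-cycle $2\,3\cdots n\,1$ still avoids all four patterns, so the subgroup is all of $\mathbb Z_n$, and Lemma~\ref{rc_inv} handles the reverse-complement companion $\{132,213,231,321\}$. In \textbf{Part 5}, $\{123,231,312\}\subseteq\{123,213,231,312\}$ together with part~(3) of Theorem~\ref{threeofthree} confine the group inside $D_n$; the reflections $\psi$ and $\alpha=1\,n\,(n-1)\cdots 2$ both survive the extra restriction by $213$ (each contains only the triples $132$ and $321$), and they already generate $D_n$, with the companion $\{123,132,231,312\}$ following from Lemma~\ref{rc_inv}. \textbf{Part 3} is the least automatic: a recursive forcing argument (the leftmost entry of the length-$\ge 3$ subword must be its maximum) yields $S_n(123,132,213,231)=\{\psi,\;n\,(n-1)\cdots 3\,1\,2\}$. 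Writing $\beta=n\,(n-1)\cdots 3\,1\,2=\psi\,(n-1,\,n)$, the group is $\langle\psi,(n-1,\,n)\rangle$, a group generated by two involutions whose product is a $4$-cycle, hence isomorphic to $D_4$ for every $n$; the inverse set $\{123,132,213,312\}$ gives the isomorphic companion by Lemma~\ref{rc_inv}.

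The main obstacle is Part 3. One must run the recursion carefully enough to see that exactly the two displayed permutations survive, and then recognize \emph{uniformly in $n$} that these two involutions generate a copy of $D_4$: the point is that $\psi$ and $(n-1,\,n)$ interact nontrivially only on the four points $\{1,2,n-1,n\}$ (where their product is a $4$-cycle), while the remaining transpositions of $\psi$ are commuting factors that do not enlarge the group, so $\langle\psi,(n-1,\,n)\rangle$ has order $2\cdot\mathrm{ord}(\psi(n-1,n))=8$ for all $n$. By contrast, the verifications in Parts 4 and 5 that the extra forbidden pattern does not destroy the needed generators (the $n$-cycle, resp.\ $\psi$ and $\alpha$) are routine pattern checks.
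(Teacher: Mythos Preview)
Your proof is correct and follows essentially the same two-step strategy as the paper: determine $S_n(T)$ explicitly and then identify the generated group, with Lemma~\ref{rc_inv} handling the paired companions. The only cosmetic difference is that in Parts~4 and~5 you bound the group from above via Lemma~\ref{inclusion} and Theorem~\ref{threeofthree} and then exhibit enough surviving generators, whereas the paper simply lists the (two) elements of each $S_n(T)$ directly; your decomposition $\beta=\psi\,(n-1,n)$ in Part~3 is the same as the paper's $\beta=(1,2)\,\psi$ after conjugating by $\psi$.
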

\begin{proof}
    
\begin{enumerate}
    \item  The set of generators is trivially empty for $n>4.$
    \item In each case, we have that the set of generators consists only of the identity and an involution. 
    \item The set $S_n(123,132,213,231)$ contains only $\psi$ and $(1,\; 2)\psi$ (whose period is $4$), hence $\langle S_n(123,132,213,231)\rangle = D_4$.
    \item The set $S_n(132,213,312,321)$ contains only the identity and the cycle $(1,\;2,\;3,\;\ldots\;,n).$ 
    \item The set $S_n(123,213,231,312)$ contains only $\psi$ and $\alpha=1\;n\quad n-1\;\ldots \;3\;2$. As seen in the proof of Theorem \ref{threeofthree}, these two permutations generate the dihedral $D_n$.
    \item The only element in the generating set is $\psi_n.$
    \item The only element in the generating set is $\id_n.$
\end{enumerate}
\end{proof}

 Now we consider the case of three patterns $\tau_1,\tau_2,\tau_3$ in $S_3$ and one pattern $\tau_4$ in $S_4.$ We suppose also that $\tau_4$ avoids each of $\tau_1,$ $\tau_2$ and $\tau_3,$ otherwise we fall back in the previous cases.
 % Notice that in the following Theorem we do not report all the possible cases. However the list is complete up to the application of the inverse or reverse-complement operators which, by Lemma \ref{rc_inv}, do not change the generated group up to isomorphism.

\begin{thm}
\label{threethreeonefour}
\begin{enumerate}
\item $\langle S_n(123,132,213,4231)\rangle \cong \langle S_n(132,213,231,4123) \rangle \cong \langle S_n(132,231,312,3214) \rangle\cong D_4$ for every $n\geq 4.$
\item $\langle S_n(123,132,231,3214)\rangle \cong  \langle S_n(132,213,231,1234)\rangle\cong  (S_3\times S_3) \rtimes \mathbb Z_2 $ for every $n\geq 6.$
\item $\langle S_n(123,231,312,1432) \rangle \cong \langle S_n(123,231,312,2143) \rangle \cong $

$\langle S_n(132,213,231,4312) \rangle \cong \langle S_n(132,231,312,2134) \rangle \cong D_n$ 

for every $n\geq 2.$

\item $\langle S_n(132,213,321,2341) \rangle \cong \langle S_n(132,213,321,3412) \rangle \cong \mathbb Z_n$

for every $n\geq 1.$

\item $\langle S_n(132,231,312,4321) \rangle \cong \langle S_n(132,231,321,4123) \rangle \cong  S_3$

for every $n\geq 1.$

\item $\langle S_n(231,312,321,1243) \rangle \cong \langle S_n(231,312,321,2134) \rangle \cong  S_4$

for every $n\geq 1.$

\item
$\langle S_n(231,312,321,1324)\rangle=\mathbb Z_2 \times \mathbb Z_2\cong D_2$ for every $n\geq 4.$

\item 
$\langle S_n(123,1 3 2, 213, 4 3 1 2)\rangle\cong (((((\mathbb Z_2\times \mathbb Z_2\times \mathbb Z_2\times \mathbb Z_2) \rtimes \mathbb Z_3) \rtimes \mathbb Z_2) \rtimes \mathbb Z_3 ) \rtimes \mathbb Z_2) \rtimes \mathbb Z_2$ for every $n\geq 8.$

\item
Let $\tau_1,\tau_2,\tau_3\in S_3$ and $\tau_4\in S_4,$ and suppose that the set $\{\tau_1,\tau_2,\tau_3,\tau_4\}$ is not obtained from one of the set in the preceding cases either by the application of the reverse-complement map, or of the inverse map, or their composition. Suppose also that $\tau_4$ avoids $\tau_1,$ $\tau_2$ and $\tau_3.$  Then, for sufficiently large $n,$
$\langle S_n(\tau_1,\tau_2,\tau_3,\tau_4)\rangle$ is either $S_n,$ or the trivial group $\{ \id_n\}$. 
\end{enumerate}
\end{thm}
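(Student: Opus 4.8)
The plan is to bound $\langle S_n(T)\rangle$ from above and then pin it down case by case. Since $\{\tau_1,\tau_2,\tau_3\}\subseteq T$, Lemma \ref{inclusion} gives $\langle S_n(T)\rangle\subseteq\langle S_n(\tau_1,\tau_2,\tau_3)\rangle$, and by Theorem \ref{threeofthree} the latter group is one of $S_n$, $D_n$, $\mathbb Z_n$, or $\{\id_n\}$. Before splitting into cases I would use Lemma \ref{rc_inv} to cut down the number of quadruples $\{\tau_1,\tau_2,\tau_3,\tau_4\}$ to be examined: both the reverse--complement and the inverse map preserve (up to isomorphism) the generated group and send a set of patterns to one of the same shape (three of length three, one of length four), so it suffices to treat one representative in each orbit of the group $\langle \mathrm{rc},{}^{-1}\rangle\cong\mathbb Z_2\times\mathbb Z_2$.

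Next I would dispose of the triples for which $\langle S_n(\tau_1,\tau_2,\tau_3)\rangle\ne S_n$. If the triple is $\{123,321,\tau\}$ the ambient group is already trivial, so $\langle S_n(T)\rangle=\{\id_n\}$. If the triple is a symmetry image of $\{132,213,321\}$ or of $\{123,231,312\}$, then $S_4(\tau_1,\tau_2,\tau_3)$ has exactly four elements, so there are only four admissible choices of $\tau_4$ in each case; for each I would check directly that either $\tau_4$ reproduces one of the preceding special cases (the $\mathbb Z_n$ and $D_n$ items, possibly after applying rc or the inverse), or that every surviving permutation of large length is forced to contain a long monotone subsequence, so that $S_n(T)$ is eventually empty by the Erd\H os--Szekeres theorem \cite{Erd_Sz} and the group is trivial.

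The substantial case is $\langle S_n(\tau_1,\tau_2,\tau_3)\rangle=S_n$. Here I would invoke Theorem \ref{general_generatorsSn} in contrapositive form: if $\langle S_n(T)\rangle\ne S_n$, then $T$ must meet each of the four families of patterns listed there. This makes the problem finite, since only finitely many admissible sets $T$ intersect all four families. For every such $T$ I would then show that for large $n$ the set $S_n(T)$ is either one of the small generating sets already analysed in the preceding items of this theorem (hence giving one of the listed special groups, which are excluded by hypothesis), or reduces to $\{\id_n\}$ or $\emptyset$, so that $\langle S_n(T)\rangle=\{\id_n\}$. Conversely, whenever $T$ fails to meet one of the four families, Theorem \ref{general_generatorsSn} yields $\langle S_n(T)\rangle=S_n$ outright.

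I expect the main obstacle to be precisely this last finite enumeration: compiling the complete list of admissible quadruples $T$ hitting all four generating families, verifying it is closed under the rc/inverse symmetry, and confirming that it coincides with the union of the preceding special cases together with a controlled family of eventually empty sets. For the latter the work is to exhibit, in each case, the explicit forbidden monotone subsequence that forces $S_n(T)$ to vanish for large $n$; describing the structure of $S_n(\tau_1,\tau_2,\tau_3)$ explicitly for the remaining $S_n$-triples (as was done for the $\mathbb Z_n$ and $D_n$ triples in Theorem \ref{threeofthree}) is the book-keeping that makes this tractable. Checking that no unexpected proper subgroup other than those already classified can appear is the delicate point.
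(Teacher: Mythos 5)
Your overall architecture (symmetry reduction via Lemma \ref{rc_inv}, splitting on $\langle S_n(\tau_1,\tau_2,\tau_3)\rangle$ via Theorem \ref{threeofthree}, and using Theorem \ref{general_generatorsSn} to make the problem finite) matches the paper's, and your treatment of the triples generating $\{\id_n\}$, $\mathbb Z_n$ and $D_n$ is sound. But in the substantial case (triple generating $S_n$) there is a genuine gap: you treat the necessary condition of Theorem \ref{general_generatorsSn} as if it were sufficient. After restricting to the finitely many quadruples $T$ that meet all four families, you claim each such $T$ either reproduces one of the special items 1--8 or has $S_n(T)$ eventually reduced to $\{\id_n\}$ or $\emptyset$. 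That dichotomy is false. For instance, $T=\{132,231,312,1234\}$ meets all four families (the pattern $1234$ alone lies in each of them), it is not a symmetry image of any of items 1--8, and for large $n$ the set $S_n(T)$ consists of exactly three permutations, namely $\psi_n$, $\alpha=n-2\;\; n-3\;\cdots\;1\;\;n-1\;\;n$ and $\beta=n-1\;\;n-2\;\cdots\;1\;\;n$; yet these generate all of $S_n$, since suitable products yield the transposition $(1,\,2)$ and the cycle $(1,\,2,\,\ldots,\,n)$, and Lemma \ref{gen_sets} applies. The same phenomenon occurs for $\{123,132,231,4312\}$, $\{123,213,231,4312\}$, $\{132,213,312,4321\}$ and $\{213,312,321,1243\}$.

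Consequently, carrying out your plan you would hit candidates that are neither special nor trivial, and your argument would not close: your stated conclusion for the surviving sets ($\langle S_n(T)\rangle=\{\id_n\}$) is simply not available for them. The missing step --- which is where most of the paper's work on the last item actually lies --- is to take each quadruple that survives the enumeration and the rc/inverse reduction and prove \emph{directly} that it still generates $S_n$, by exhibiting explicit products of the few elements of $S_n(T)$ that produce a transposition and an $n$-cycle. Your plan accounts only for the two cheap outcomes (groups inherited from items 1--8, and collapses via the Erd\H{o}s--Szekeres theorem), whereas the essential content of this case is precisely that the intersection condition of Theorem \ref{general_generatorsSn} is far from sufficient, and the candidates it fails to eliminate must be disposed of by hand (the paper does this after a machine-assisted enumeration, which also uses the reverse and complement symmetries from Lemma \ref{rcifnotpsi} to shrink the list to a few representatives).
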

\proof
\begin{enumerate}
\item We prove only that 
$\langle S_n(123,132,213,4231)\rangle \cong D_4,$ the other cases are analogous. Since a permutation in the generating set must avoid 123, 132, 213 and 4231, the only elements of this set are $\psi,$ $\alpha,$ $\beta$ and $\gamma,$ where $$\alpha=n-1\quad n\quad n-2\quad n-3\ldots \;4\;3\;1\;2\;=\;(1,\,n-1)(2,\,n )\prod_{j=3}^{\lfloor \frac{n}{2}\rfloor +1}(j,\,n-j+1),$$ $$\beta=n\quad n-1\quad n-2\quad n-3\ldots \;4\;3\;1\;2\;=\; (1,\,n,\,2,\,n-1) \prod_{j=3}^{\lfloor \frac{n}{2}\rfloor +1}(j,\,n-j+1),$$ and $$\gamma=n-1\quad n\quad n-2\quad n-3\ldots \;4\;3\;2\;1\;=\;(1,\, n-1,\,2,\, n)\prod_{j=3}^{\lfloor \frac{n}{2}\rfloor +1}(j,\,n-j+1).$$

Note that $\gamma=\beta^{-1},$  $\psi=\beta\alpha\beta,$ the order of $\beta$ is four,  the order of $\alpha$ and $\alpha\beta=(1,\;2)$ is two, and the group generated by $\psi,\alpha,\beta$ and $\gamma$ is $D_4.$

\item We prove only that $\langle S_n(123,132,231,3214)\rangle \cong  (S_3\times S_3) \rtimes \mathbb Z_2 .$ 

Since a permutation in the generating set must avoid 123, 132, 231 and 3214, the only elements of this set are $\psi,$ $\alpha$ and $\beta,$ where $$\alpha=n\quad n-1\quad n-2\ldots \;4\;2\;1\;3\;=\;(1,\,n,\,3,\,n-2,\,2,\,n-1)\prod_{j=4}^{\lfloor \frac{n}{2}\rfloor +1}(j,\,n-j+1),$$ and $$ \beta=n\quad n-1\quad n-2\ldots \;4\;3\;1\;2\;=\;(1,\,n,\,2,\,n-1)(3,\,n-2) \prod_{j=4}^{\lfloor \frac{n}{2}\rfloor +1}(j,\,n-j+1).$$

Notice that $\psi$ can be eliminated from the set of generators, since $$\psi=(\beta^{-1}\alpha^{-1})^3\beta^{-1}.$$
Now, the subgroup $\langle \alpha,\beta\rangle$ of $S_n$ is clearly isomorphic to the subgroup $G=\langle \widehat \alpha,\widehat \beta  \rangle$ of $S_6$ where $\widehat \alpha =(1,\,6,\,3,\,4,\,2,\,5)$ and $\widehat \beta=(1,\,6,\,2,\,5)(3,\,4).$

Consider now the subgroups $N$ and $H$ of $S_6$ defined by $$N=\langle (4,\,6,\,5),\, (2,\,3)(4,\,5),\, (1,\,3,\,2)(4,\,5,\,6),\, (1,\,6,\,3,\,4,\,2,\,5) \rangle$$ and $$H=\langle (5,,6) \rangle .$$ Long but trivial calculations show that $N$ and $H$ are subgroups of $G,$ with $N\unlhd G,$  every $g\in G$ can be written as $g=kh,$ where $k\in N$ and $h\in H,$ and  $N\cap H=\{ id\}.$ By Lemma 
\ref{sdp}, we have $G\cong N\rtimes H.$

Moreover, it is easy to show that $N\cong S_3\times S_3.$ In fact, if we identify $S_3\times S_3$ with the subgroup of $S_6$ generated by the cycles $(1,2,3),$ $(1,2),$ $(4,5,6)$ and $(4,5),$ the following map $f$ yields an isomorphism between $N$ and $S_3\times S_3.$  $$f((4,\,6,\,5))=(1,\,2,\,3)(4,\,6,\,5),\quad f((2,\,3)(4,\,5))=(1,\,2)(4,\,6),$$ $$f((1,\,3,\,2)(4,\,5,\,6))=(1,\,2,\,3), \mbox{ and }f((1,\,6,\,3,\,4,\,2,\,5))=(1,\,3,\,2)(4,\,5).$$

\item We prove only that $\langle S_n(123,231,312,1432) \rangle \cong D_n.$ 

The generating set is easily seen to contain only the permutations $\psi, $ $\alpha$ and $\beta,$ where $$\alpha= n-1\quad n-2\; \ldots \;2\; 1\;n\,=\,\prod _{j=1}^{\lfloor \frac{n}{2} \rfloor+1}(j,n-j) $$ and $$\beta= n-2\quad n-3\quad  \ldots \;2\; 1\;n\quad  n-1=\, (n-1,\,n)\prod _{j=1}^{\lceil  \frac{n}{2} \rceil}(j,\,n-j-1).$$

Notice that $\beta$ is a superfluous generator since $\beta=\alpha\psi\alpha,$ and that $\psi\alpha=(1,\,2,\,3,\,\ldots \, ,n)$ has order $n.$ The assertion follows. 

\item We consider only $\langle S_n(132,213,321,2341) \rangle.$ 

The only elements of the generating set different from the identity permutation are  $$\alpha=n\;1\;2\;\ldots \; n-1\,=\,(1,\,n,\,n-1,\,\ldots\,,2) $$ and $$\beta=n-1\quad n\,1\,\ldots \, n-2.$$ Notice that $\beta$ is composed by two cycles of length $\frac{n}{2}$ if $n$ is even and by one cycle of length $n$ is $n$ is odd. In both cases $\beta=\alpha^2,$ hence the  generated group is $\mathbb Z_n.$

\item This is a particular case of Theorem \ref{generatingSk}.
% The only elements in the generating set are the identity, the cycle $(1\,2)$ and the cycle $(1\,3)$ in the first case, and the identity, the cycle $(1\,2)$ and the cycle $(1\,3\,2)$ in the second case.

\item The only elements in the generating set are the identity,  $(1,\,2),$ $(2,\,3)$ and $(1,\,2)(3,\,4)$ in the first case, and the identity,  $(n-1,\,n),$ $(n-2,\,n-1)$ and $(n-3,\,n-2)(n-1,\,n)$ in the second case.

\item Given a permutation in the generating set, the symbols $n-1$ and $n$ must occupy the last and the second last position, otherwise the permutation contains 312 or 321. Similarly, the symbols 1 and 2 must occupy the first or the second position. The remaining symbols must be in increasing order since the permutation must avoid 1324.

Hence the set $S_n(231,321,312,1324)$ contains only the identity, the elementary transpositions $(1,\,2)$ and $(n-1,\,n)$, and the involution $(1,\,2)(n-1,\,n),$ which generate $\mathbb Z_2\times \mathbb Z_2.$ 

% \item Trivial but long computations shows that in the remaining cases, when $\tau_4$ avoids the other three patterns and the set $\{\tau_1,\tau_2,\tau_3,\tau_4\}$  does not contains at the same time $\id$ and $\psi,$ the generated group is the whole $S_n.$
% sostituita con:

\item Consider $\langle S_n(123, 1 3 2,213, 4 3 1 2)\rangle,$ with $n\geq 4.$
 The generating set contains the permutations $\psi_n,$ $\alpha,$ $\beta$ and $\gamma,$ where
 $$\alpha=n-1\quad n\quad n-2\quad n-3\;\ldots 1, $$
 $$\beta= n-1\quad n\quad n-3\quad n-2\quad n-4\quad n-5\;\ldots 1, $$ and
 $$\gamma=  n\quad n-2\quad n-1\quad n-3\quad n-4\quad n-5\;\ldots 1 .$$

One can easily realize that the groups $\langle S_n(123, 1 3 2,213, 4 3 1 2)\rangle$ for $n\geq 8$ are all isomorphic.
Using GAP \cite{GAP4} we verified that $\langle S_8(123, 1 3 2,213, 4 3 1 2)\rangle$ is isomorphic to the group of order 1152 appearing in the statement.

\item Let $T\in S$ be a set consisting of three (distinct) patterns of length three,  $\tau_1,\tau_2$ and $\tau_3,$ and one pattern $\tau_4$ of length four. Assume that $\tau_4$ does not contain any of the other elements of $T.$ Suppose that $\langle S_n(T)\rangle\neq S_n.$

Recall that by Theorem \ref{general_generatorsSn}  for $\langle S_n(T)\rangle $ to be different from $S_n,$ $T$ must intersect each of the following sets 

$$A=\{123,231,312,321,1234,2341,4123,4231\},$$
$$B=\{123,132,213,1234,1243,1324,2134\},$$ 
$$C=\{123,213,231,1234,2134,2341\}$$
and $$D=\{123,132,231,1234,1243,2341\}.$$

\textbf{Case 1:}

If $T$ does not contain neither $\psi_3$ nor $\psi_4,$ by Lemma \ref{rcifnotpsi} and Lemma \ref{rc_inv}, we have $\langle S_n(T)\rangle=\langle S_n(T)\cup S_n(T^r)\cup S_n(T^c)\cup S_n(T^{rc})\cup S_n(T^{-1})\cup S_n((T^{r})^{-1})\cup S_n((T^{c})^{-1})\cup S_n((T^{rc})^{-1})\rangle.$

Hence, each of the sets $T^\ast,$ where $\ast$ denotes any of $1,r,c,rc,-1,$ or of their compositions, must intersect each of the sets $A,$ $B,$  $C$ and $D$ or, equivalently, $T$ must intersect each of the sets $A^\ast,$ $B^\ast,$  $C^\ast$ and $D^\ast.$

A long but trivial computation that we realized with SageMath  \cite{sagemath} shows that the sets $T$ with these characteristics are  26. Previous considerations and results allow us to reduce ourselves to the following three sets 

% $$\{1 2 3 4, 1 3 2, 2 1 3, 3 1 2\}$$    
% $$\{1 2 3 4, 2 3 1, 1 3 2, 2 1 3\}$$     
% $$\{1 2 3 4, 2 3 1, 1 3 2, 3 1 2\}$$
% $$\{1 2 3 4, 2 3 1, 2 1 3, 3 1 2\}$$   
% $$\{2 3 1, 1 2 4 3, 2 1 3, 3 1 2\}$$    
% $$\{1 3 2, 4 3 1 2, 2 1 3, 1 2 3\}$$
% $$\{2 3 1, 1 3 2, 4 3 1 2, 1 2 3\}$$
% $$\{2 3 1, 1 3 2, 4 3 1 2, 2 1 3\}$$     
% $$\{2 3 1, 1 2 3, 4 3 1 2, 2 1 3\}$$
% $$\{3 4 2 1, 1 3 2, 2 1 3, 1 2 3\}$$ 
% $$\{3 4 2 1, 1 3 2, 3 1 2, 1 2 3\}$$  
% $$\{3 4 2 1, 1 3 2, 2 1 3, 3 1 2\}$$    
% $$\{3 4 2 1, 1 2 3, 2 1 3, 3 1 2\}$$   
% $$\{2 3 1, 1 2 3, 2 1 4 3, 3 1 2\}$$     
% $$\{2 3 1, 1 3 2, 4 1 2 3, 2 1 3\}$$      
% $$\{2 3 1, 2 1 3 4, 1 3 2, 3 1 2\}$$     
% $$\{1 3 2, 3 2 1 4, 3 1 2, 1 2 3\}$$      
% $$\{2 3 1, 1 3 2, 3 2 1 4, 1 2 3\}$$      
% $$\{2 3 1, 1 3 2, 3 2 1 4, 3 1 2\}$$    
% $$\{2 3 1, 1 2 3, 3 2 1 4, 3 1 2\}$$    
% $$\{2 3 4 1, 1 3 2, 2 1 3, 3 1 2\}$$  
% $$\{1 3 2, 2 1 3, 4 2 3 1, 1 2 3\}$$    
% $$\{1 4 3 2, 1 2 3, 2 1 3, 3 1 2\}$$    
% $$\{2 3 1, 1 4 3 2, 1 2 3, 2 1 3\}$$   
% $$\{2 3 1, 1 4 3 2, 1 2 3, 3 1 2\}$$    
% $$\{2 3 1, 1 4 3 2, 2 1 3, 3 1 2\}$$     
% From this list of 26 sets we can cancel all the sets that we already considered in the previous items and that can be obtained from these by the reverse-complement , the inverse map or their composition. 
% In this way we get the following three sets

$$\{1 3 2, 2 3 1, 3 1 2, 1 2 3 4\},\;\{1 2 3, 1 3 2, 2 3 1, 4 3 1 2\},\;\{1 2 3, 2 1 3, 2 3 1, 4 3 1 2\}.$$
% $$\{1 2 3 4, 2 3 1, 2 1 3, 3 1 2\}$$   
% $$\{3 4 2 1, 1 3 2, 3 1 2, 1 2 3\}$$ 
% $$\{3 4 2 1, 1 2 3, 2 1 3, 3 1 2\}$$   

% Notice that three of the preceding sets can be obtained from the others applying the reverse-complement or the inverse map. So we are reduced to consider three cases.

\begin{itemize}
    \item Let $G=\langle S_n(1 3 2, 2 3 1, 3 1 2, 1 2 3 4)\rangle.$
    Notice that the generating set contains only $$\alpha=n-2\quad n-3\;\ldots 1\;n-1\quad n,$$
    $$\beta=n-1\quad n-2\;\ldots 1\;n,$$ and $\psi_n.$

    Hence in $G$ we have the elements $$(\beta^{r})^{-1}=2\;3\;\ldots n\;1=(1,\;2,\;\ldots \;,n ),$$ and 

    $$\psi \beta^{-1} \psi \beta \alpha  \psi=(1,2),$$ which implies $G=S_n$ by Lemma \ref{gen_sets}.

    \item Let $G=\langle S_n(1 2 3, 2 1 3, 2 3 1, 4 3 1 2)\rangle.$
    Notice that the generating set contains only
    $$\alpha=n\quad n-2\quad n-3\;\ldots 1\;n-1,$$
    $$\beta=n-1\quad n-2\quad n-3\;\ldots 1\;n,$$ and $\psi_n.$
    The generated group thus contains $$(\alpha  \beta)^{rc}=(1,\; 2)$$ and $\beta^c=(1,\;2,\;\ldots \;,n),$ which generate $S_n$ by Lemma \ref{gen_sets}.

    \item Let $G=\langle S_n(123, 132, 2 3 1,  4 3 1 2)\rangle.$
    Notice that the generating set contains only
    $$\alpha=n\;1\;n-2\quad n-3\;\ldots 2,$$
    $$\beta=1\;n\;n-2\quad n-3\;\ldots 2$$ and $\psi_n.$
    The generated group thus contains $$(\beta  \alpha)^{rc}=(1,\; 2)$$ and $\beta^r=(1,\;2,\;\ldots \;,n)$ which generate $S_n$ by Lemma \ref{gen_sets}.
\end{itemize}

\textbf{Case 2:}

Now we consider the cases when $T$ contains $\psi_3$ or $\psi_4.$ 

If $\sigma_4=\psi_4,$ $\psi_3$ is not in $T$ since $\sigma_4$ does not contain any of $\sigma_1,$ $\sigma_2$ and $\sigma_3.$  As recalled above, if $\langle S_n(T) \rangle\neq S_n,$ $T$ must intersect each of the sets $A,B,C$ and $D.$
We can also assume that $T$ does not contain $\id_3,$ otherwise the generated group would be trivial for large $n.$

The only sets $T$ with all these properties are 

$$\{132, 213, 312, 4321\},\;\{132, 213, 231, 4321\},$$
$$\{132, 231, 312,  4321\},\;\{213, 231, 312,  4321\}.$$

The third one of these sets has been already considered in the previous items of the Theorem and the last one is the reverse-complement of the third one. 
The second case is the inverse of the first one. So we can consider only the first case.

Let $G=\langle S_n(132, 213, 312, 4321)\rangle.$
Notice that the generating set contains only
$$\alpha=(1,\;2,\;\ldots \;,n),$$
$$\beta=3\;4\;\ldots n\;2\;1,$$ and $\id_n.$

The generated group thus contains $$\alpha^2  \beta^{-1}=(1,\; 2),$$ and coincides with $S_n$ by Lemma \ref{gen_sets}.

Finally, we consider the case when $\psi_3\in T.$
In this case we can suppose that
\begin{itemize}
    \item $\sigma_4\neq \psi_4,\id_4,$
    \item $T$ does not contain $\id_3,$
    \item $\sigma_4$ does not contain any other $\sigma_i'$s,
    \item $T$ intersects the sets $A,$ $B,$ $C,$ and $D$ defined above,
    \item $T$ intersects the sets $A^{-1},B^{-1},C^{-1}$ and $D^{-1}$ (since $\langle S_n(T)\rangle=\langle S_n(T^{-1}) \rangle).$
\end{itemize}

 If these hypothesis are not satisfied the generated group is either $\{\id_n\}$ or $S_n.$ 

There are 14 of such sets $T,$ but 13 of them have been already considered.

The remaining case is 
%\{321, 1243, 213, 312\}
$$\{213, 312, 321, 1243\} .$$

Hence let $G=\langle S_n(213, 312, 321, 1243)\rangle.$
Notice that the generating set contains only
$$\alpha=2\;3\;4\;\ldots \;n\;1,$$
$$\beta=1\;3\;4\;\ldots\; n\;2$$ and $\id_n.$
The generated group thus contains $$\beta  \alpha^{-1}=(1,\; 2)$$ thus is $S_n$ by Lemma \ref{gen_sets}. 

This concludes the proof.

\end{enumerate}

\endproof

Now we consider the groups generated by sets $S_n(T)$ where $T\subseteq S_4$ and $|T|=4.$

\begin{thm}
Let $T\subseteq S_4$ with $|T|=4.$
Suppose that $T$ does not contain $\psi_4$ and that $T\neq\{1234, 1432, 3214, 4312 \}$ and $T\neq \{1234, 1432, 3214, 4231\}.$  Then $\langle S_n(T)\rangle=S_n.$
\end{thm}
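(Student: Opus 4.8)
The plan is to reduce the general four-pattern problem to a manageable finite set of cases by exploiting the symmetry lemmas, and then to exhibit explicit generators of $S_n$ in each surviving case. First I would invoke Theorem \ref{general_generatorsSn} in the strong form used throughout Section \ref{section_T_four}: for $\langle S_n(T)\rangle\neq S_n$ it is necessary that $T$ intersect each of the four generating-obstruction sets $A,B,C,D$ (restricted to $S_3\cup S_4$, but here $T\subseteq S_4$, so only the length-four members $\{1234,2341,4123,4231\}$, $\{1234,1243,1324,2134\}$, $\{1234,2134,2341\}$ and $\{1234,1243,2341\}$ are relevant). Since we assume $\psi_4=4321\notin T$, Lemma \ref{rcifnotpsi} together with Lemma \ref{rc_inv} lets me enlarge the generating set by the reverse, complement, reverse-complement and inverse images, so $T$ must in fact meet $A^{\ast},B^{\ast},C^{\ast},D^{\ast}$ for every symmetry $\ast$. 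This is exactly the combinatorial screening already performed in Theorem \ref{threethreeonefour}, Case~1, and I would delegate the enumeration to the same SageMath computation cited there.

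The heart of the argument is therefore the finite case analysis. After applying the eight symmetries (the dihedral group of the square acting on patterns, together with inversion), the candidate sets $T\subseteq S_4$ with $|T|=4$, $\psi_4\notin T$, that survive all four intersection conditions reduce to a small list of equivalence-class representatives. For each representative I would compute $S_n(T)$ explicitly for generic large $n$: by the structure forced by avoiding four length-four patterns, the class $S(T)$ is a grid class with few peg permutations (cf. Proposition \ref{bounded_classes_structure} and Theorem \ref{fixed_G_structure} are not directly needed here, but the same inflation bookkeeping describes the members), so $S_n(T)$ consists of a bounded number of explicit permutations. The goal in each case is to locate among these generators both a full $n$-cycle and a transposition of adjacent symbols (or any transposition $(a,b)$ with $b-a$ coprime to $n$), whence Lemma \ref{gen_sets} forces $\langle S_n(T)\rangle=S_n$. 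When no $n$-cycle/transposition pair survives — which happens precisely for the two excluded sets $\{1234,1432,3214,4312\}$ and $\{1234,1432,3214,4231\}$ — the generated group is a proper subgroup, which is why those two sets are carved out in the hypothesis.

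Concretely, for each surviving $T$ I would write down $S_n(T)$ as a short explicit list (typically $\id_n$, $\psi_n$, and two or three further permutations), express the relevant permutations in cycle notation, and verify by a one-line computation that suitable products yield $(1,2,\ldots,n)$ and an adjacent transposition. Because $\psi_4\notin T$ and (after discarding the cases where $\id_4\in T$, which would force a trivial group) the class contains genuinely nontrivial permutations, such a pair is essentially always present; the symmetry closure from Lemma \ref{rcifnotpsi} is what guarantees that even when one symmetry-image of $T$ gives only $n$-cycles and another only transpositions, the union supplies both. The main obstacle is not any single algebraic identity but the bookkeeping of the case analysis: correctly enumerating the symmetry classes of four-element subsets of $S_4$ meeting all four obstruction sets, and checking that the two listed exceptions are the only ones failing to contain a generating pair. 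I would lean on the machine enumeration (SageMath, as in Theorem \ref{threethreeonefour}) to certify completeness of the list, and then dispatch each representative by the explicit Lemma \ref{gen_sets} argument.
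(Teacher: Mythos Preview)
Your screening step matches the paper exactly: assume $\langle S_n(T)\rangle\neq S_n$, invoke Theorem~\ref{general_generatorsSn}.\textit{ii} so that $T$ must meet each of $A,B,C,D$, use Lemmas~\ref{rcifnotpsi} and~\ref{rc_inv} (legitimate since $\psi_4\notin T$) to force $T$ to meet all eight symmetry images of these sets, and let SageMath enumerate the survivors. The paper gets the same six sets you would.

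Where your plan diverges, and where it goes wrong, is in the disposal of the surviving cases. You propose to list $S_n(T)$ explicitly as ``typically $\id_n$, $\psi_n$, and two or three further permutations'' and then extract an $n$-cycle and a transposition. But every one of the six surviving sets contains $1234$, so $\id_n\notin S_n(T)$ for $n\geq 4$; and your remark that ``$\id_4\in T$ would force a trivial group'' is simply false (e.g.\ $\psi_n\in S_n(1234)$ for all $n$). More seriously, you assert without argument that $S(T)$ is a grid class with bounded $|S_n(T)|$. The grid-class machinery of Proposition~\ref{bounded_classes_structure} runs in the opposite direction: boundedness is the hypothesis, not the conclusion, and four length-$4$ patterns need not force it. So your ``short explicit list'' may not exist, and even if it does you have not shown it.

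The paper sidesteps all of this with a cleaner device you do not mention: for each of the four non-exceptional survivors it exhibits a set $T'=\{\sigma_1,\sigma_2,\sigma_3,\tau_4\}$ with $\sigma_i\in S_3$, $\sigma_i\leq\tau_i$, already treated in Theorem~\ref{threethreeonefour}, so that $S_n(T')\subseteq S_n(T)$ and $\langle S_n(T')\rangle=S_n$ forces $\langle S_n(T)\rangle=S_n$ immediately. For example $S_n(123,213,312,3421)\subseteq S_n(1234,3214,3421,4312)$. No enumeration of $S_n(T)$ is needed. For the two exceptional sets the paper then argues structurally (the last $n-9$ or the middle $n-12$ entries are forced) to show the generated group is proper. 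Your outline gives no hint of either mechanism.
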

\proof

Let $T\subseteq S_4\setminus\{4321\}$ with $|T|=4.$ Suppose that $S_n(T)$ generates a group different from $S_n.$

Recall from Theorem \ref{general_generatorsSn}.\textit{ii} that for $\langle S_n(T)\rangle $ to be different from $S_n,$ $T$ must intersect each of the following sets 

$$A=\{1234,2341, 4123,4231\},$$
$$B=\{1234,1243,1324,2134\},$$ 
$$C=\{1234,2134,2341\}$$
and $$D=\{1234,1243,2341\}.$$
Since $4321\notin T,$
by Lemma \ref{rcifnotpsi} and Lemma \ref{rc_inv},
$\langle S_n(T)\rangle=\langle S_n(T)\cup S_n(T^r)\cup S_n(T^c)\cup S_n(T^{rc})\cup S_n(T^{-1})\cup S_n((T^{r})^{-1})\cup S_n((T^{c})^{-1})\cup S_n((T^{rc})^{-1})\rangle.$

We are assuming that $\langle S_n(T)\rangle$ is not $S_n.$ Hence each one of the sets $T^\ast,$ where $\ast$ denotes any of $1,r,c,rc,-1$ or of their compositions, must intersect each of the sets $A,$ $B,$  $C$ and $D.$ 
% or, equivalently, $T$ must intersects each of the sets $A^\ast,$ $B^\ast,$  $C^\ast$ and $D^\ast.$

A long but trivial computation that we carried out with the aid of SageMath shows that the only sets  $T\subseteq S_4\setminus\{4321\}$ with $|T|=4$ that intersect each of the sets $A^\ast,$ $B^\ast,$ $C^\ast$ and $D^\ast$ are

$$\{1234, 3214, 3421, 4312 \},
\{1234, 1324, 3421, 4312 \},$$ $$
\{1234, 1432, 3421, 4312\},
\{1234, 1432, 3214,4312 \},$$ $$
\{1234, 1432,3214, 3421 \},
\{1234, 1432, 3214, 4231\}.$$

Notice that $$S_n(123,213,312,3421) \subseteq S_n(1234, 3214,3421, 4312 )$$ and the set on the left generates $S_n$ by Theorem  \ref{threethreeonefour}. Hence  $$\langle S_n(1234, 3214,3421, 4312 ) \rangle =S_n.$$

Similarly, $$S_n(123,132,312,3421) \subseteq S_n(1234,1324, 3421, 4312),$$
and
$$S_n(123,132, 312,3421) \subseteq S_n(1234, 1432, 3421, 4312),$$
and 
$$S_n(123,132,213,3421) \subseteq S_n(1234, 1432,3214, 3421 ),$$
hence $S_n(1234,1324, 3421, 4312 ),$ $S_n(1234, 1432, 3421, 4312)$ and $S_n(1234, 1432, 3214,3421 )$ generate $S_n.$

Using SageMath it is possible to verify that $S_n(1234, 1432,3214, 4312 )$ does not generate $S_n$ for sufficiently large $n$ $(n>20).$

In fact, for $n\geq  9,$ the last $n-9$ elements of any permutation in $S_n(1234, 1432,3214, 4312)$ are the smallest ones in decreasing order and the set $S_n(1234, 1432,3214, 4312)$ has always cardinality $145.$ To explain this fact, notice that given $\pi\in S_n(1234, 1432,3214, 4312),$ with $n\geq 9,$
\begin{itemize}
\item the position of the element $1$ is greater than or equal to $n-4,$ otherwise $\pi$ would contain $1234$ or $1432.$
\item Similarly the element $2$ has position greater than or equal to $n-5.$
\item The element $2$ precedes the element $1.$ In fact,
if the element $2$  follows the element $1,$ all the elements before $1$ must be in increasing order because $\pi$ avoids $4312.$ But this is impossible since $\pi$ avoids also $1234.$ 
\item  For the same reason $3$ precedes $2.$ Hence we have that the elements $321$ form a decreasing sequence.
\item The element $1$ is in last position, otherwise we would have an occurrence of $3214.$
\end{itemize}

Iterating this process we  conclude that the last $n-9$ elements of $\pi$ are the smallest ones in decreasing order. 
Notice that the same conclusion does not hold for the first nine elements of the permutation.
As an example consider the following permutation in $S_{21}(1234, 1432, 3214,4312 ),$
$$19\, 20\, 21\, 14\, 13\, 16\, 15\, 18\, 17\, 12\, 11\, 10\, 9\, 8\, 7\, 6\, 5\, 4\, 3\, 2\, 1. $$
As a consequence, if $n\geq 9,$ there is a bijection between $S_n(1234, 1432,3214, 4312 )$ and $S_{n+1}(1234, 1432,3214, 4312 ).$  

% Given a permutation $\pi$ in the first set, scale every element adding one and attach $1$ at the end. This produce a permutation in the second set and every permutation of the second set can be obtained in this way.

For $n=2k+1,$ $k\geq 10,$ the element $k+1$ turns out to be fixed in any permutation in this set and hence also in the generated group. Similarly for $n=2k,$ $k\geq 10,$ the elements $k$ and $k+1$ are in position $k+1$ and $k$ respectively. Hence every element in the generated group fixes the set $\{k,k+1\}.$

It can be shown similarly that neither $S_n(1234, 1432, 3214, 4231)$  generates $S_n$ for sufficiently large $n.$ In fact, it possible to prove that, given $\pi\in S_n(1234, 1432, 3214, 4231) $ then, for $n\geq 13,$ $\pi_7\pi_8\ldots \pi_{n-6}=n-6\quad n-7\, \ldots 7.$

% \todo{si direbbe che valga una cosa analoga ma qui la cardinalita \'e 289. Ad esempio per n=13 tutte hanno il 7 fisso quindi non possono generare Sn, ma come dimostrarlo? per n=14 hanno 8 7 in posizioni 7 8, per n=18 hanno tutte 12,11,10,9,8,7 in posizioni 7,8,9,10,11,12. sembra che gli elementi in posizioni 7,...,n-6 siano 7,...,n-6 in ordine decrescente.}

\endproof

\section{Open problems}

We conclude the paper with some open problems. 

Firstly, suppose that  the sequence $(S_n(T))_{n\geq 0}$ is eventually constant. What kind of groups arise in this way? For example, is it possible to obtain the dihedral group $D_k,$ for $k>4?$

Another problem is that of completing the classification of $\langle S_n(T)\rangle$ when $T$ consists of four or more patterns. In particular, we wonder whether there exists a general procedure to determine the group $\langle S_n(T)\rangle$ for any finite set of patterns $T.$

\addcontentsline{toc}{section}{Bibliography}
\bibliographystyle{plain}
\bibliography{BIBLIOGRAFIA}

\end{document}